\newtheorem{thm}{Theorem}[section]
\newtheorem{cor}[thm]{Corollary}
\newtheorem{lem}[thm]{Lemma}
\newtheorem{prp}[thm]{Proposition}
\newtheorem{defprp}[thm]{Definition and Proposition}
\theoremstyle{definition}
\newtheorem{defn}[thm]{Definition}
\newtheorem{rem}[thm]{Remark}
\newtheorem{exa}[thm]{Example}
\newcommand{\scr}[1]{\mathscr #1}
\definecolor{wco}{rgb}{0.5,0.2,0.3}
\numberwithin{equation}{section} \theoremstyle{remark}
\newcommand{\ua}{\uparrow}
\title{\bf Compressibility and Stochastic Stability of Monotone Markov Chains} 
\author{Sergey Foss\footnote{School of MACS, Heriot-Watt University, EH14 4AS Edinburgh and Sobolev Institute of Mathematics. Email: s.foss@hw.ac.uk} and Michael Scheutzow\footnote{Technische Universit\"at Berlin, MA 7-5, Strasse des 17~Juni 136, 10623 Berlin. E-mail: ms@math.tu-berlin.de}}
\date{}
\begin{document}
\allowdisplaybreaks
\def\J{\mathcal J} \def\S{\mathcal S}    \def\R{\mathbb R}  \def\ff{\frac}  \def\B{\mathbf B}
\def\N{\mathbb N} \def\kk{\kappa} \def\m{{\bf m}}
\def\ee{\varepsilon}\def\ddd{D^*}
\def\dd{\delta} \def\DD{\Delta} \def\vv{\varepsilon} \def\rr{\rho}
\def\<{\langle} \def\>{\rangle} \def\GG{\Gamma} \def\gg{\gamma}
  \def\nn{\nabla} \def\pp{\partial} \def\E{\mathbb E}
\def\d{\text{\rm{d}}} \def\bb{\beta} \def\aa{\alpha} \def\D{\scr I}
  \def\si{\sigma} \def\ess{\text{\rm{ess}}}
\def\beg{\begin} \def\beq{\begin{equation}}  
\def\F{\scr F}
\def\e{\text{\rm{e}}} \def\ua{\underline a} \def\OO{\Omega}  \def\oo{\omega}
\def\P{\mathbf P} \def\ifn{I_n(f^{\bigotimes n})}
\def\C{\scr C}   \def\G{\mathcal G}   \def\aaa{\mathbf{r}}     \def\r{r} \def\CC{\mathcal{C}}
\def\gap{\text{\rm{gap}}} \def\prr{\pi_{{\bf m},\varrho}}  \def\r{\mathbf r}
\def\Z{\mathbb Z} \def\vrr{\varrho} \def\ll{\lambda}
\def\L{\scr L}\def\Tt{\tt} \def\TT{\tt}\def\II{\mathbb I}
\def\i{{\rm in}}\def\Sect{{\rm Sect}}  \def\H{\mathbb H}
\def\M{\scr M}\def\Q{\mathbb Q} \def\texto{\text{o}} \def\LL{\Lambda}
\def\Rank{{\rm Rank}} \def\B{\scr B} \def\i{{\rm i}} \def\HR{\hat{\R}^d}
\def\to{\rightarrow}\def\l{\ell}\def\iint{\int}
\def\EE{\scr E}\def\no{\nonumber}
\def\A{\scr A}\def\V{\mathbb V}\def\osc{{\rm osc}}
\def\BB{\mathbb B}\def\Ent{{\rm Ent}}\def\W{\mathbb W}
\def\U{\scr U}\def\8{\infty} \def\si{\sigma}
\def\33{\interleave}
\def\M{\mathcal{M}}
\def\K{\mathcal{K}}
\def\E{\mathcal{E}}
\def\L{\mathcal{L}}
\newcommand{\1}{\mathbf{1}}
\renewcommand{\bar}{\overline}
\renewcommand{\tilde}{\widetilde}
\maketitle

\begin{abstract}
For a stochastically monotone Markov chain taking values in a Polish space, we present a number of conditions for existence and for uniqueness of its stationary regime, as well as  for 
closeness
of its transient trajectories. In particular, we generalise a basic result by Bhattacharya and Majumdar (2007) where a certain form of mixing, or {\em splitting} condition was assumed uniformly over the state space. We do not rely on continuity properties of transition probabilities.
\end{abstract}

{\bf Keywords:} Markov chain, stochastic ordering, stochastic monotonicity, existence and uniqueness of stationary regime,
stability, convergence, compressibility, coupling.

{\it AMS2020 classification:} Primary 60J05; secondary 06A06, 06F30.

\section{Introduction}

The paper deals with various problems related to existence and uniqueness of the stationary distribution for stochastically monotone Markov chains. Namely, we consider a Markov chain on a Polish (complete and separable) metric space $(\E, d)$ and assume that it is equipped with a partial ordering that is compatible with the metric.
We assume further that the Markov transition probability kernel
preserves monotonicity (see Section 2 for further details). We introduce a concept of {\em compressibility} (which may be viewed as an analogue of the 
contractiveness property) and provide several sufficient conditions
for that to hold. Further, we provide sufficient conditions for the  existence and for the uniqueness
of the stationary distribution based on certain coupling constructions. We complement our results with a number of examples and counter-examples.

Our research is motivated by a stability result by Bhattacharya and Majumdar \cite{BM}  for a stochastically monotone Markov chain taking values in an interval of the real line  (see also Bhattacharya and Majumdar \cite{BM2} and  Bhattacharya and Waymire \cite{BW}, Corollary 19.8 and Theorem 19.9, for an extension to a closed subset of $\mathbb{R}^k$). 
 The authors assume that a certain mixing, or {\em splitting} condition holds uniformly over the whole state space.  The result was intensively used in a variety of applications. Motivated by models in economics, Foss, Shneer, Thomas and Worrall \cite{FSTW} obtained analogous results
for a class of monotone Markov-modulated Markov chains and also for certain non-Markovian models. Matias and Silva \cite{MS} extended results of \cite{BM} to a more general class of orderings in $\mathbb{R}^d$.
Butkovsky and Scheutzow \cite{BS}, \cite{BSC} considered continuous-time monotone Markov processes and provided criteria for the existence, uniqueness and weak (exponential) convergence to a stationary measure under a Lyapunov-type condition plus a {\em splitting} condition. Their splitting condition was similar to ours. Roughly speaking it says that for initial conditions $x \preceq y$ the chains starting from $x$ and $y$ can be coupled in such a way that after some finite random time the two chains are ordered in the opposite direction, so the order is {\em swapped}. 

There are a number of surveys on monotone Markov chains and monotone iterative maps, that quote and discuss the results of \cite{BM, BM2},  see e.g.~\cite{GM} and references therein.

The rest of our paper is structured as follows. In Section 2 we introduce basic definitions and notation, recall known results  related to monotonicity of Markov chains, and provide useful properties. Then, in Section 3, we discuss various conditions for compressibility of Markov chains. Section 4 deals with conditions for existence, and Section 5 with conditions for uniqueness of a stationary distribution. In Section 6 we summarise the obtained results and highlight their relation to those of Bhattacharya and Majumdar \cite{BM}, and provide further comments and examples. In Section 7 we comment on the representation of a Markov chain as a stochastic recursion 
and on relations between splitting conditions and stability.
Finally, in Section 8, we mention possible generalisations of our results.

\section{Couplings and monotone Markov chains}

\subsection{Couplings}


In this subsection, we recall and discuss several coupling constructions related to
probability measures, random variables and Markov chains.
Let $({\cal E}_1, \EE_1)$ and  $({\cal E}_2, \EE_2)$ be   measurable spaces. 

{\it Coupling of probability measures.}
For  probability measures $\mu$ on $({\cal E}_1, \EE_1)$ and  $\nu$ on $({\cal E}_2, \EE_2)$, a
coupling is a probability measure $\lambda$ on the product
space $({\cal E}_1\times {\cal E}_2, {\EE_1\otimes \EE_2})$ having $\mu$ and $\nu$ as
marginals, i.e. ${\mu}(A) = \lambda (A\times {\cal E}_2)$ and
$\nu (B) = \lambda ({\cal E}_1 \times B)$, for any $A\in {\EE}_1$ and $B\in {\EE}_2$.

{\it Coupling of random variables.} 
For two random variables, $\xi$ and $\eta$, that are defined, in general,  
on two different probability spaces and take values in   $({\cal E}_1, \EE_1)$ and  $({\cal E}_2, \EE_2)$, respectively,  a coupling is a pair $(\xi',\eta')$ of random variables on some
probability space $(\Omega, {\cal F}, {\bf P})$ such that $\xi =_d \xi'$ and $\eta =_d \eta'$.

Consider a time-homogeneous Markov chain (MC) $(X_n)_{n\in \N_0}$ on  a Polish space $({\cal E}, d)$ equipped with its Borel $\sigma$-algebra ${\cal \EE}$ (generated by the open sets in $\cal E$), and 
with transition kernel $P(x,A) = {\mathbf P} (X_{1}\in A \ | X_0=x)$, $x\in {\cal E}, A\in {\cal \EE}$.  We assume the usual measurability assumptions to be in force (for each $x$, $P(x,\cdot)$ is a probability measure and, for each $A$, $P(x , A)$ is measurable in $x$) but we do not assume the Feller property (i.e. continuity of the map $x \mapsto \int f(y) P(x,\d y)$ for each bounded and continuous function $f$).
For $n=1,2,\ldots$, denote by $P^{(n)}(x,A)$ the $n$-step
transition kernel.

From now on, we use the notation $(X_n^x)_{n \in \N_0}$ for the Markov chain
that starts from initial state $X_0^x=x\in \cal E$.

{\it Coupling of two Markov chains.}
One can couple two chains $(X_n^x)_{n \in \N_0}$ and $(X_n^y)_{n \in \N_0}$ with the same transition kernel and initial states $X_0^x=x$ and $X_0^y= y$ on a common probability space as a collection
of pairs $(X_n^x,X_n^y)_{n \in \N_0}$ of random variables. 
Note that we require each of the sequences $(X_n^x)_{n \in \N_0}$ and $(X_n^y)_{n \in \N_0}$ to form Markov chains, but their joint distribution   may be arbitrary. In particular, the sequence $(X_n^x, X_n^y)_{n \in \N_0}$ may or may not form a Markov chain 
on the product space ${\cal E} \times {\cal E}$. 
If it does, then the coupling is called a {\it bivariate Markovian} coupling (or {\it bi-Markovian} coupling, for short).  In this case, the coupling is characterized by  a Markov kernel $\bar P$ on ${\cal E} \times {\cal \E}$ with both marginals equal to $P$.
A standard example of a bi-Markovian  coupling is the  
{\it independent coupling} where, for any $A,B\in \EE$ and for any $n=0,1,\ldots$, 
\begin{align*}
{\mathbf P}((X_{n+1}^x, X_{n+1}^y)\in A\times B \, | 
\, (X_m^x,X_m^y)_{m\le n}) =
{\mathbf P} (X_{n+1}^x\in A \ | \ X_n^x) 
\cdot {\mathbf P} (X_{n+1}^y\in B \  | \  X_n^y) \ \ \text{ a.s.} 
\end{align*}
Another popular coupling, a {\it monotone bi-Markovian coupling},
will be considered in the next subsection.  

Sometimes, we consider couplings for a particular pair $(x,y)\in {\cal E}\times {\cal E}$ of initial conditions rather than for all pairs. In this case we call the corresponding coupling of the Markov chains starting in $x$ and $y$ an $(x,y)$-coupling. In the following we will mostly work with the concept of a marginally Markovian coupling which is a weaker concept than a bi-Markovian coupling.\\

\begin{defn}
  Let $x,y \in {\cal E}$ and let $(X_n^x,X_n^y)_{n \in \N_0}$ be an $(x,y)$-coupling of two Markov chains with transition kernel $P$. The coupling is called a {\em marginally Markovian $(x,y)$-coupling} if it satisfies
  \begin{align*}
\P\big(X^x_{n+1}\in \cdot\ |\ \sigma(X^x_k,X^y_k)_{k \leq n}\big)&=P\big(X_n^x,\cdot\big) \mbox{ and}\\
\P\big(X^y_{n+1}\in \cdot\ |\ \sigma(X^x_k,X^y_k)_{k \leq n}\big)&=P\big(X_n^y,\cdot\big)
  \end{align*}
  almost surely for any $n \in \N_0$.
\end{defn}

Clearly, every bi-Markovian coupling is marginally Markovian.

We will now consider couplings for more than one pair of initial conditions. To do this, we use two upper indices for the chain starting in $x$, the first is $x$ and the second one refers to the process starting in $y$ and which we want to couple with the first process.

\begin{prp}\label{prp3.2}
  Fix $x,y\in {\cal E}$ and suppose that $(X_n^x,X^y_n)_{n \in \N_0}$ is a marginally Markovian $(x,y)$-coupling. Assume that, for each $\hat x,\hat y \in {\cal E}$, $(\hat X_n^{\hat x,\hat y},\hat X^{\hat y,\hat x}_n)_{n \in \N_0}$ is a marginally Markovian $(\hat x,\hat y)$-coupling, all defined on a common probability space with    $(X_n^x,X^y_n)_{n \in \N_0}$ independent of all $\hat X$-variables (whatever their upper and lower indices).
  Let $\tau$ be a stopping time with respect to $\sigma\big(X_k^x,X_k^y\big)_{k \leq n}$, $n \in \N_0$. Then the sequence
  $$
 (\tilde X_n, \tilde Y_n):=\left\{
    \begin{array}{ll}
(X_n^x,X_n^y)& \mbox{ if }n \leq \tau\\
      (\hat X^{u,v}_{n-\tau} ,\hat X^{v,u}_{n-\tau})& \mbox{ if }n > \tau \mbox{ and } (X_\tau^x,X_\tau^y)=(u,v)
    \end{array}
    \right.
    $$
 is a marginally Markovian $(x,y)$-coupling.   
\end{prp}

In what follows, we call the marginally Markovian $(x,y)$-coupling $(\tilde X_n,\tilde Y_n)_{n\ge 0}$
a {\it concatenation} of the two couplings.

We will provide a straightforward proof of the proposition in an appendix.

\begin{defn}
Let a set $H \in \EE \otimes \EE$ be given. 
Assume that, for any pair $(x,y)\in {\cal E}\times {\cal E}$,  there
is a marginally Markovian $(x,y)$-coupling  $(X_n^x,X_n^y)_{n\ge 0}$ such that the hitting time of 
the set $H$,
\begin{align*}
\tau_{x,y} \equiv \tau_{x,y}(H)
=
\min \{n\ge 0: (X_n^x,X_n^y)\in H\}
\end{align*}
is almost surely finite.
Then the set $H$ is {\it achievable} and the corresponding family of marginal Markovian couplings is called {\it $H$-successful}, or
just {\it successful}.
 \end{defn}

\begin{rem}
The independent and the monotone (see the next subsection)  couplings are the most popular couplings in the literature. 
However, in some cases, there may exist a ``better'' coupling that makes 
 a pair of Markov chains recurrent while
the independent 
coupling is transient.
We provide an example of a random walk with a discrete  distribution of jumps. A similar example of a random walk in the plane for which the increments have a continuous distribution may be easily constructed, too.

\begin{exa} 
Let $(X_n)_{n \in \N_0}$ be a simple random walk on the 2-dimensional lattice $\Z^2$,  $X_n^x = x+ \sum_1^n \eta_i$ where $\eta_n=(\eta_{n,1}, \eta_{n,2}),\,n \in \N$ is an i.i.d.~sequence of 2-dimensional vectors with independent coordinates, with each of them taking three values,
$-1, 0$ and $1$ with equal probabilities $1/3$. This random walk forms an aperiodic null-recurrent Markov chain. 

For $x\neq y$, an independent coupling of $(X_n^x,X_n^y)$ is a  zero-mean 4-dimensional random walk which is known to be transient. 
On the other hand, we may use the following coupling that is a concatenation of independent and of ``identical'' couplings. For any $x=x_0\equiv (x_{0,1},x_{0,2}), y=y_0=(y_{0,1},y_{0,2})$, any $n$ and
$x_k=(x_{k,1},x_{k,2}), y_k=(y_{k,1},y_{k,2})$, $k=n,n+1$, 
we let
\begin{align*}
&{\mathbf P} (X_{n+1}^x =x_{n+1}, X_{n+1}^y = y_{n+1} \ | \
X_{n}^x =x_{n}, X_{n}^y = y_{n})\\
= \ &{\mathbf P} (X_{n+1}^x=x_{n+1} \ | \ X_n^x = x_n) \cdot
 {\mathbf P} (X_{n+1}^x=y_{n+1} \ | \ X_n^x = y_n)
\end{align*}
if $x_n\neq y_n$ (so the jumps are independent) and

\begin{align*}
&{\mathbf P} (X_{n+1}^x =x_{n+1}, X_{n+1}^y = y_{n+1} \ | \
X_{n}^x =x_{n}, X_{n}^y = y_{n}) \\
= \ & {\mathbf P} (X_{n+1}^x=x_{n+1} \ | \ X_n^x = x_n) \cdot
\1_{x_{n+1}=y_{n+1}}
\end{align*}
if $x_n = y_n$ (so the jumps are identical).\\ 
This means that  we proceed with independent coupling $(X_n^x,X_n^y)$ (where $X_n^x=(X_{n,1}^x,X_{n,2}^x)$
and $X_n^y=(X_{n,1}^y,X_{n,2}^y)$) until time $\nu_1=\nu_1(x,y)$ which is
the first time when $X_{n,1}^x=X_{n,1}^y$ and $X_{n,2}^x=X_{n,2}^y$, or, equivalently, the first time when $X_n^x-X_n^y=(0,0)$.
Since a sequence $Y_n = X_n^x-X_n^y$ is a 2-dimensional zero-mean random walk with independent coordinates having a symmetric distribution on the set $\{-2,-1,0,1,2\}$, such a random walk is recurrent and, therefore, $\nu_1(x,y)$ is 
finite a.s., for any $x$ and $y$ from the 2-dimensional lattice.\\
Then, starting from time $\nu_1$, we continue with the identical coupling, so $X_n^x=X_n^y$ for all $n\ge \nu_1$.
Since $X_n^x$ is a 2-dimensional simple zero-mean random walk which is recurrent, there exists an a.s. finite time 
$\nu_2 \ge \nu_1$ when $X_n^x=X_n^y=(0,0)$, for all $x$ and $y$ (or, equivalently, this coupling is $H$-successful, with $H = \{(0,0)\}\times \{(0,0)\}$). 
Note that this is a bi-Markovian coupling.
\end{exa}

\end{rem}

\subsection{Stochastic monotonicity}

Until the end of the paper, we deal with a Polish state space $({\cal E},d)$ equipped with the $\sigma$-algebra $\EE$ generated by the open sets. We assume further that the state space is equipped with a {\it partial ordering} $\preceq$ that is compatible with the topology generated by the metric $d$ (see e.g. \cite{Li} or \cite{FGS}), namely, that 
\begin{align}\label{compatibility}
\text{the set} \ \ M:=\{(x,y):\,x \preceq y\} \ \ 
 \text{is closed in}  \ \ {\cal E} \times {\cal E}.
\end{align}
We call such a space an {\em ordered Polish space}.

We say that a set $I \in \EE$ is {\it increasing} if, for any 
 $x\in I$ and $y\in {\cal E}$, the  inequality $x \preceq y$ implies $y\in I$. We denote the set of all measurable and increasing sets by $\D$.

A real-valued measurable function $g: {\cal E} \rightarrow {\mathbb{R}}$ is called {\it monotone} or {\it increasing}
if $x \preceq y$ implies $g(x)\le g(y)$. 
We denote  by ${\mathcal G}$ the class of all bounded and monotone measurable  functions taking values in $[0,1]$ (restricting the range to the unit interval is convenient  and does not lead to any loss of generality).

We denote the set of all probability measures on $(\E, d)$ by ${\cal{M}}_1(\E)$.  For $\mu_1,\mu_2\in {\cal{M}}_1(\E)$ we say that $\mu_1$ is {\it stochastically smaller than} $\mu_2$
and write $\mu_1 \preceq \mu_2$ if $\mu_1(I) \le \mu_2(I)$,
for any set $I\in \D$. Note that  $\mu_1 \preceq \mu_2$ implies  $\int g(x)\,\d \mu_1(x)\leq \int g(x)\,\d \mu_2(x)$ for every $g \in {\mathcal G}$ which is easy to see by considering non-negative finite linear combinations of indicator functions and their limits.

We say that a Markov chain with transition kernel $P$ is {\it stochastically monotone} ({\it monotone}, for short) if 
the mapping ${\cal G} \ni g \mapsto Pg$ preserves monotonicity, i.e.~$g \in \cal G$ implies that  the function $Pg(x) := \int_{\cal E}
g(y) P(x, \d y)$ belongs to $\cal G$ as well. 
Note that it is enough to ensure monotonicity by considering indicator functions $g=\1_I$, $I \in  \D$  only.
Denoting $\mu P(\cdot):=\int P(x,\cdot)\,\d \mu(x)$ for a probability measure $\mu$, it is clear that $P$ is monotone iff for every pair of probability measures
$\mu_1 \preceq \mu_2$, it follows that $\mu_1P \preceq \mu_2 P$.

\begin{prp}\label{prop1} (Strassen's theorem, see \cite{KKO} or  \cite{Li}).
  Assume that $\cal E$ is an ordered Polish space.
If $\mu_1,\,\mu_2 \in {\cal M}_1(\cal E)$ satisfy $\mu_1 \preceq \mu_2$ then there is a coupling $\lambda$ of $\mu_1$ and $\mu_2$ for which $\lambda(M)=1$.
\end{prp}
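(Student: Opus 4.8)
The plan is to reduce the statement to a finite transportation problem, to solve that by a feasibility criterion, and then to recover the general Polish case by discretisation and a weak-limit argument in which the closedness of $M$ plays the decisive role.

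First I would settle the case of a finite ordered set $\E=\{a_1,\dots,a_k\}$. Here a coupling $\lambda$ of $\mu_1$ and $\mu_2$ with $\lambda(M)=1$ is nothing but a feasible flow in the transportation network with supplies $\mu_1(\{a_i\})$, demands $\mu_2(\{a_j\})$, total supply and demand both equal to $1$, and an edge from $a_i$ to $a_j$ admitted exactly when $a_i\preceq a_j$. By Gale's feasibility criterion --- equivalently the max-flow--min-cut theorem, or Hall's marriage theorem after splitting atoms into rational pieces --- such a flow exists if and only if for every set of sources $A\subseteq\E$ the mass it must dispatch can be absorbed by the set of destinations reachable from it:
\begin{equation*}
\mu_1(A)\le\mu_2\bigl(\{\,y:\ x\preceq y\ \text{for some}\ x\in A\,\}\bigr).
\end{equation*}
The set $I_A:=\{\,y:\ x\preceq y\ \text{for some}\ x\in A\,\}$ is increasing and contains $A$, so the hypothesis $\mu_1\preceq\mu_2$ gives $\mu_1(A)\le\mu_1(I_A)\le\mu_2(I_A)$, which is exactly the inequality required. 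Thus stochastic domination is precisely the finite feasibility condition.

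For a general Polish $\E$ I would approximate. As $\mu_1,\mu_2$ are tight, for each $n$ I fix a compact set carrying all but $1/n$ of each mass and cut it into finitely many Borel cells of diameter at most $1/n$; collapsing each cell to a representative point produces measures $\mu_1^{(n)},\mu_2^{(n)}$ on a finite ordered set, to which the previous step applies. This yields couplings $\lambda_n$ whose supports lie, up to the error introduced by collapsing, within a $1/n$-neighbourhood of $M$. The marginals of $\lambda_n$ converge weakly to $\mu_1$ and $\mu_2$, so $\{\lambda_n\}$ is tight and some subsequence converges weakly to a coupling $\lambda$ with the correct marginals. Since $M$ is closed, working with its shrinking closed neighbourhoods $M^\varepsilon$ and the portmanteau inequality $\limsup_n\lambda_n(F)\le\lambda(F)$ for closed $F$ gives $\lambda(M^\varepsilon)=1$ for every $\varepsilon>0$, and intersecting over $\varepsilon$ yields $\lambda(M)=1$.

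The step I expect to be the main obstacle is the order-compatible discretisation. One must choose the cells and the partial order induced on their representatives so that, simultaneously, (i) the discretised measures remain stochastically ordered, so that the finite case is applicable, and (ii) every comparable pair of representatives arises from original points lying within $1/n$ of a genuinely $\preceq$-comparable pair, so that the supports of the $\lambda_n$ stay near $M$ and closedness of $M$ can be invoked in the limit. It is exactly here that compatibility of the order with the metric --- the closedness assumption \eqref{compatibility} --- is used. An alternative route that avoids an explicit construction is to pass to the Kantorovich dual of the transport problem with cost $0$ on $M$ and $+\infty$ off $M$: the domination hypothesis forces the dual value to be $0$, and Kantorovich duality for lower-semicontinuous costs on Polish spaces then produces a coupling concentrated on $M$. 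On that route the difficulty migrates to justifying duality for a cost taking the value $+\infty$.
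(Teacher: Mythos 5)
The paper does not prove this proposition: it is quoted as a known result, with references to \cite{KKO} and \cite{Li}, so there is no internal proof to compare against. Your outline is, in substance, the proof in \cite{Li}: the finite case via max-flow--min-cut, with feasibility supplied by stochastic domination applied to increasing hulls, followed by discretisation and a weak-limit argument in which closedness of $M$ enters through the portmanteau theorem; that part of your argument is sound. The step you single out as the main obstacle is less of an obstacle than you fear, provided you do not insist on inducing a partial order on the representatives. Run the finite step as a bipartite feasibility problem for the relation ``cell $C_i$ is linked to cell $C_j$ iff some $x\in C_i$ and $y\in C_j$ satisfy $x\preceq y$''; Gale's condition for this relation asks that $\mu_1\big(\bigcup_{i\in S}C_i\big)$ not exceed the $\mu_2$-mass of the cells meeting the increasing hull $I_S:=\{y:\ x\preceq y\ \text{for some}\ x\in\bigcup_{i\in S}C_i\}$, and this follows from $\mu_1\preceq\mu_2$ once one knows that $I_S$ is a measurable increasing set, i.e.\ $I_S\in\D$. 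That measurability is the one point where genuine care is needed: take the cells to be compact (closed pieces of the compact set carrying all but $1/n$ of the mass); then $I_{C_i}$ is the projection of the closed set $M\cap(C_i\times{\cal E})$ along the compact factor $C_i$, hence closed and increasing, and $I_S$ is a finite union of such sets. This is exactly where \eqref{compatibility} does its work, as you predicted, and it simultaneously guarantees that linked representatives lie within $2/n$ of $M$. The residual mass outside the compact set is absorbed by a dummy state linked to everything, contributing $O(1/n)$ to $\lambda_n(M^c)$, which vanishes in the limit. Your alternative route via Kantorovich duality with a $\{0,+\infty\}$-valued cost also works, but, as you note, it requires the duality theorem for lower-semicontinuous costs taking infinite values, a heavier tool than the discretisation argument.
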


\begin{prp}\label{prosi} 
(\cite{KKO}, Proposition 4).  
Assume that the ${\cal E}$-valued Markov chain $(X_n)_{n\in \N_0}$ is monotone. 
For any finite or countable sequence $x_1\preceq x_2 \preceq \ldots \preceq x_k \preceq \ldots$ of elements of $\cal E$, there exists a coupling 
of the Markov chains $(X_n^{x_1})_{n \in \N_0}, (X_n^{x_2})_{n \in \N_0},\ldots, (X_n^{x_k})_{n \in \N_0},\ldots$      
such that
$X_n^{x_1}\preceq X_n^{x_2} \preceq \ldots \preceq X_n^{x_k}\preceq \ldots$ a.s., for all $n$.
\end{prp}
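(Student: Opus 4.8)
The plan is to realise all the chains as the coordinate processes of a single vector-valued Markov chain living on the set of ordered tuples, and then to read off the conclusion from the Markov structure. Write $M_k := \{(u_1,\dots,u_k): u_1 \preceq \cdots \preceq u_k\}$ (for $k \le \infty$, with $M_\infty \subset \mathcal E^{\mathbb N}$); each $M_k$ is closed, being a finite or countable intersection of the closed sets $\{u_i \preceq u_{i+1}\}$ coming from \eqref{compatibility}, and hence is itself Polish. The goal is to produce a transition kernel $\widehat P$ on $M_k$ such that, for every ordered tuple $\mathbf u=(u_i)$, the measure $\widehat P(\mathbf u,\cdot)$ is supported on $M_k$ and has $i$-th marginal equal to $P(u_i,\cdot)$ for every $i$. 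Granting such a $\widehat P$, the Ionescu--Tulcea theorem builds a measure on path space under which $\{(X_n^{x_1},X_n^{x_2},\dots)\}_n$ is a Markov chain with kernel $\widehat P$ started from the deterministic ordered point $(x_1,x_2,\dots)\in M_\infty$. Two inductions then finish the argument: since $\widehat P$ preserves the marginals coordinatewise, the conditional law of coordinate $i$ at the next step depends only on $u_i$ and equals $P(u_i,\cdot)$, so the projection onto coordinate $i$ is a Markov chain with kernel $P$ started at $x_i$, i.e. a copy of $\{X_n^{x_i}\}$; and since $\widehat P(\mathbf u,\cdot)$ is carried by $M_k$, the ordering $X_n^{x_1}\preceq X_n^{x_2}\preceq\cdots$ propagates from one time to the next, hence holds a.s. for all $n$.

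It remains to construct $\widehat P$. Fix an ordered tuple $\mathbf u=(u_i)$. Since the chain is monotone, $u_i \preceq u_{i+1}$ forces $P(u_i,\cdot)\preceq P(u_{i+1},\cdot)$ (apply monotonicity to the Dirac masses, noting $\delta_{u_i}\preceq\delta_{u_{i+1}}$ exactly when $u_i\preceq u_{i+1}$). Thus the target marginals $P(u_1,\cdot)\preceq P(u_2,\cdot)\preceq\cdots$ are themselves stochastically ordered. For two of them, Strassen's theorem (Proposition \ref{prop1}) supplies a coupling supported on $M=M_2$. To couple all of them simultaneously, I would couple each consecutive pair $(P(u_i,\cdot),P(u_{i+1},\cdot))$ by Strassen and then glue these couplings along their shared marginal $P(u_{i+1},\cdot)$ via the gluing (disintegration) lemma, which is available because $\mathcal E$ is Polish; in the countable case one takes the projective limit of the finite gluings, whose existence on the Polish space $\mathcal E^{\mathbb N}$ is guaranteed by Kolmogorov's extension theorem. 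The resulting measure is carried by the closed set $M_k$ and has the prescribed marginals, so the set $\Gamma(\mathbf u)$ of admissible couplings is nonempty for every $\mathbf u\in M_k$.

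The genuine difficulty is not existence for a fixed $\mathbf u$ but measurability of the selection $\mathbf u \mapsto \widehat P(\mathbf u,\cdot)$, which is exactly what a transition kernel requires. I would phrase this as a measurable-selection problem: regard $\Gamma$ as a set-valued map from $M_k$ into the Polish space $\mathcal M_1(M_k)$ of probability measures on $M_k$ with the weak topology. Its values are nonempty by the previous paragraph and closed (the marginal maps are weakly continuous, so the constraint of prescribed marginals cuts out a closed set). Because $P$ is a measurable kernel, $\mathbf u \mapsto (P(u_i,\cdot))_i$ is measurable, and the marginal constraints defining $\Gamma$ then make $\Gamma$ a measurable correspondence, so the Kuratowski--Ryll-Nardzewski selection theorem yields a measurable selection $\widehat P$. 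Equivalently, it suffices to establish a measurable version of Strassen's coupling for pairs, since the consecutive gluing and the projective limit are themselves measurable operations in the input measures. This measurable Strassen step is the main obstacle, and it is where both the topological hypothesis \eqref{compatibility} (closedness of $M$) and the Polish structure are used in an essential way.

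I would close by remarking that the statement asks only for \emph{some} coupling with the stated ordering, not a Markovian one; the construction above delivers the stronger conclusion that the ordered coupling may be taken to be the monotone (Markovian) coupling of the chains referred to at the end of the subsection. If one wished to avoid the selection machinery, an alternative is to build the finite-dimensional distributions of the coupling directly, invoking Strassen at each fixed time through a regular conditional distribution; but the need to choose these conditional couplings measurably in the conditioning variables reproduces precisely the same selection issue, so it cannot be bypassed for free.
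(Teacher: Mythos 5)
The paper does not actually prove this proposition: it is imported verbatim from Kamae, Krengel and O'Brien \cite{KKO} (their Proposition 4), so there is no internal proof to compare against. Judged on its own terms, your argument aims at something strictly stronger than what is asserted --- a time-homogeneous Markovian monotone coupling, i.e.\ a single kernel $\widehat P$ on the set of ordered tuples with prescribed coordinate marginals --- and the skeleton is right, but the step you yourself flag as the crux is the one that is genuinely under-justified. Kuratowski--Ryll-Nardzewski requires the correspondence $\mathbf u \mapsto \Gamma(\mathbf u)$ to be weakly measurable (i.e.\ $\{\mathbf u: \Gamma(\mathbf u)\cap U \neq \emptyset\}$ measurable for every open $U$), and this does not follow merely from ``the marginal constraints are measurable'': what comes cheaply is measurability of the graph, and upgrading that to a Borel selection needs the compactness of each $\Gamma(\mathbf u)$ (Prokhorov: fixed marginals force tightness, and the constraint set is closed) together with a projection/section theorem, or an appeal to an existing ``measurable Strassen'' result. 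As written this is a gap, albeit a fillable one; in the countable case the measurability in $\mathbf u$ of the consecutive gluings and of the projective limit needs the same treatment.

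The route that matches the toolkit the paper sets up --- and, in substance, the proof in \cite{KKO} --- avoids selection entirely by working on path space. For $x\preceq y$ one shows by induction on the time horizon, integrating bounded increasing functions against the kernel and using monotonicity of $P$, that the law of $(X_0^x,\dots,X_n^x)$ is stochastically dominated by that of $(X_0^y,\dots,X_n^y)$ on $\mathcal E^{n+1}$ with the componentwise order; letting $n\to\infty$, the full path laws are ordered on the ordered Polish space $\mathcal E^{\mathbb N}$, and a single application of Strassen's theorem (Proposition \ref{prop1}) on path space produces the ordered coupling of two chains. For a countable family one applies the same domination to the sequence of path measures and invokes the countable monotone-coupling result (\cite[Proposition 6]{KKO}, exactly as in the proof of Proposition \ref{KKO}). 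This yields precisely the statement claimed --- \emph{some} coupling with ordered paths --- with no measurable-selection machinery; the price is that the resulting coupling is not exhibited as Markovian, which your construction would deliver and which the paper tacitly relies on elsewhere when it ``continues with a monotone coupling'' after a stopping time.
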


We will need the following facts. The first one is a special case of \cite[Lemma 1]{KK} while the second one is essentially  \cite[Theorem 2]{KK}.

\begin{prp}\label{cons}
Let $\pi_1$ and $\pi_2$ be two probability measures on ${\cal E}$ which agree on  $\D$. Then $\pi_1=\pi_2$.
\end{prp}

\begin{prp}\label{orderedPolish}
  The space ${\cal M}_1(\cal E)$ of probability measures equipped with a complete  metric generating weak convergence and the stochastic  order $\preceq$ is an ordered Polish space.
\end{prp}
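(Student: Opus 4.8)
The plan is to verify the three ingredients that together constitute ``ordered Polish space'': that $\M_1(\E)$ is Polish under a metric generating weak convergence, that $\preceq$ is genuinely a partial order on $\M_1(\E)$, and that the relation $\preceq$ is closed in $\M_1(\E)\times\M_1(\E)$ in the sense of \eqref{compatibility}. The first is a classical fact: since $(\E,d)$ is Polish, the space of Borel probability measures with the weak topology is again Polish, metrizable for instance by the Prohorov or the bounded--Lipschitz metric, and complete under the latter. I would simply cite this and fix one such complete metric $\rho$ for the remainder of the argument.

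Next I would check that $\preceq$ is a partial order. Reflexivity and transitivity are immediate from the defining inequalities $\mu_1(I)\le\mu_2(I)$ for all $I\in\D$. The only nontrivial point is antisymmetry: if $\mu_1\preceq\mu_2$ and $\mu_2\preceq\mu_1$, then $\mu_1(I)=\mu_2(I)$ for every increasing measurable set $I$, and I must deduce $\mu_1=\mu_2$. The natural route is to show that the increasing sets, or equivalently the bounded monotone functions in $\cal G$, are measure-determining on an ordered Polish space; one approach is to use Strassen's theorem (Proposition~\ref{prop1}) to build a coupling $(X,Y)$ with $X\preceq Y$ a.s.\ and law of $X$ equal to $\mu_1$, law of $Y$ equal to $\mu_2$, and then exploit the equality $\int g\,\d\mu_1=\int g\,\d\mu_2$ for all $g\in\cal G$ together with the closedness \eqref{compatibility} of $M$ to force $X=Y$ a.s.

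The main work, and the step I expect to be the real obstacle, is closedness of the order: I must show that if $\mu_1^{(n)}\Rightarrow\mu_1$, $\mu_2^{(n)}\Rightarrow\mu_2$ weakly and $\mu_1^{(n)}\preceq\mu_2^{(n)}$ for all $n$, then $\mu_1\preceq\mu_2$. The cleanest strategy is to pass to the functional characterization: by Proposition~\ref{prop1} and the remark following it, $\mu_1^{(n)}\preceq\mu_2^{(n)}$ is equivalent to the existence of a coupling $\lambda^{(n)}$ on $\E\times\E$ supported on the closed set $M$ with the prescribed marginals. The sequences $(\mu_i^{(n)})_n$ are weakly convergent, hence tight, so the couplings $(\lambda^{(n)})_n$ are tight on $\E\times\E$; by Prohorov's theorem I would extract a weakly convergent subsequence $\lambda^{(n_k)}\Rightarrow\lambda$. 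The marginals of the limit $\lambda$ are $\mu_1$ and $\mu_2$ by continuity of the marginal maps, and since each $\lambda^{(n_k)}(M)=1$ with $M$ closed, the portmanteau theorem gives $\lambda(M)\ge\limsup_k\lambda^{(n_k)}(M)=1$. Thus $\lambda$ is a coupling of $\mu_1$ and $\mu_2$ concentrated on $M$, which by the Strassen equivalence yields $\mu_1\preceq\mu_2$. The crux is therefore the tightness-and-extraction argument; the compatibility assumption \eqref{compatibility} that $M$ is closed is exactly what makes the portmanteau step go through, and it is essential that Strassen's theorem be available to convert between the order relation and the existence of an $M$-supported coupling in both directions.
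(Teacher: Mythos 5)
Your argument is correct, but it is a genuinely more self-contained route than the paper's, whose entire proof consists of two citations: the Polishness of ${\cal M}_1({\cal E})$ under a complete metric for weak convergence is quoted as classical, and the closedness property \eqref{compatibility} for the stochastic order is attributed to \cite[Proposition 3]{KKO}. Your Strassen--tightness--Prohorov--portmanteau chain (couplings $\lambda^{(n)}$ supported on $M$, tightness inherited from the two convergent marginal sequences, extraction of a weak limit $\lambda$ whose marginals are $\mu_1$ and $\mu_2$, and $\lambda(M)\geq\limsup_k\lambda^{(n_k)}(M)=1$ because $M$ is closed) is in effect a proof of that cited result and is sound. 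Note only that the final passage from $\lambda(M)=1$ back to $\mu_1\preceq\mu_2$ is the elementary direction and needs no appeal to Strassen: for $I\in\D$ one has $(I\times{\cal E})\cap M\subseteq{\cal E}\times I$ since $I$ is increasing, whence $\mu_1(I)\leq\mu_2(I)$. The citation buys the paper brevity; your version buys independence from \cite{KKO} at the cost of length.

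The one place where your sketch is thinner than it should be is antisymmetry. From a single coupling $X\preceq Y$ with laws $\mu_1,\mu_2$ and $\int g\,\d\mu_1=\int g\,\d\mu_2$ for all $g\in{\cal G}$ you cannot immediately conclude $X=Y$: the result \cite[Proposition 2.4]{FGS} used for this purpose elsewhere in the paper requires $X$ and $Y$ to have the \emph{same} law, which is exactly what is to be proved. Two repairs are available: either argue as in Lemma \ref{cons} of the paper, taking Strassen couplings in both directions and gluing them to obtain two a.s.\ ordered random variables with common law $\mu_2$, to which \cite[Proposition 2.4]{FGS} then applies; or make explicit a separation argument (a countable family of continuous increasing functions characterizing the closed partial order), so that $g(X)\leq g(Y)$ a.s.\ together with ${\mathbf E}\,g(X)={\mathbf E}\,g(Y)$ forces $g(X)=g(Y)$ a.s.\ for each such $g$ and hence $X=Y$ a.s. As written, ``closedness of $M$ forces $X=Y$'' leaves this step open, though it is easily closed.
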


\begin{proof} The fact that for each Polish space $\cal E$ the space ${\cal M}_1(\cal E)$ is itself Polish with respect to some complete metric which generates the topology of weak convergence is well-known. Antisymmetry of the partial order follows from Proposition \ref{cons} and the fact that this space satisfies property \eqref{compatibility} is the content of \cite[Proposition 3]{KKO}.
\end{proof}

\begin{prp}\label{KKO}
If $\mu,\mu_1,\mu_2,...\in {\cal M}_1({\cal E})$ satisfy $\mu_1 \preceq \mu_2 \preceq ...$ and $\mu_i \preceq \mu,\;i \in \N$, then there exist ${\cal E}$-valued random variables $X,X_1,X_2,...$ on some probability space  with respective laws $\mu,\mu_1,\mu_2,...$ such that $X_1 \preceq X_2 \preceq ...\preceq X$ almost surely. 
\end{prp}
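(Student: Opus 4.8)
The plan is to first realise the increasing family $\{\mu_n\}$ as an increasing (time-inhomogeneous) Markov chain, and then to adjoin the dominating variable $X$ by a single further application of Strassen's theorem (Proposition~\ref{prop1}) on a path space.

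First I would build the lower chain. For each $i$, Strassen's theorem applied to $\mu_i\preceq\mu_{i+1}$ gives a coupling $\lambda_i$ with $\lambda_i(M)=1$; disintegrating $\lambda_i$ with respect to its first marginal $\mu_i$ produces a Markov kernel $K_i$ on $\mathcal{E}$ with $K_i(x,\{y:x\preceq y\})=1$ for $\mu_i$-a.e.\ $x$. By the Ionescu--Tulcea theorem there is a Markov chain $(X_n)_{n\ge 1}$ with $X_1\sim\mu_1$ and transitions $K_i$; by construction $X_n\sim\mu_n$ and $X_1\preceq X_2\preceq\cdots$ almost surely. Set $D:=\{(x_n)_{n\ge 1}\in\mathcal{E}^{\mathbb{N}}:x_1\preceq x_2\preceq\cdots\}$ with the coordinatewise order. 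Since $M$ is closed, $D$ is a closed subset of the Polish space $\mathcal{E}^{\mathbb{N}}$, hence itself an ordered Polish space; let $\bar\mu\in\mathcal{M}_1(D)$ be the law of $(X_n)_{n\ge 1}$.

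Next I would adjoin the top. Let $\phi:\mathcal{E}\to D$, $\phi(y)=(y,y,\dots)$, be the constant-path embedding, and $\phi_*\mu\in\mathcal{M}_1(D)$ the pushforward of $\mu$. In the coordinatewise order, $(x_n)_{n\ge 1}\preceq\phi(y)$ means exactly $x_n\preceq y$ for all $n$. Thus, once I show $\bar\mu\preceq\phi_*\mu$ in $\mathcal{M}_1(D)$, Strassen's theorem on the ordered Polish space $D$ yields a coupling of $\bar\mu$ and $\phi_*\mu$ supported on $\{(p,q)\in D\times D:p\preceq q\}$. Reading off the coordinates of the first component as $X_1,X_2,\dots$ and the common value of the (almost surely constant) second component as $X$, I obtain variables with laws $\mu_1,\mu_2,\dots$ and $\mu$ such that $X_1\preceq X_2\preceq\cdots$ (the first component lies in $D$) and $X_n\preceq X$ for all $n$, which is the assertion.

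It remains to prove the domination $\bar\mu\preceq\phi_*\mu$, which is the heart of the argument. By the characterisation of stochastic order underlying Strassen's theorem it is enough to check $\bar\mu(U)\le\phi_*\mu(U)$ for every open increasing $U\subseteq D$. The key reduction is to finitely many coordinates: writing $\pi_N$ for the projection onto the first $N$ coordinates, set $U_N:=\{p\in D:\pi_N^{-1}(\pi_N(p))\subseteq U\}$. One checks that $U_N$ is increasing, depends only on the first $N$ coordinates, and that $U_N\uparrow U$ precisely because $U$ is open (each $p\in U$ has a finite-coordinate basic neighbourhood inside $U$). Writing $A_N:=\pi_N(U_N)\subseteq\mathcal{E}^N$ for the associated increasing set, so that $\bar\mu(U_N)=\mathbf{P}((X_1,\dots,X_N)\in A_N)$ and $\phi_*\mu(U_N)=\mu(\tilde A_N)$ with $\tilde A_N:=\{x:(x,\dots,x)\in A_N\}$ increasing, the relations $X_1\preceq\cdots\preceq X_N$ and $\mu_N\preceq\mu$ give
\[
\bar\mu(U_N)=\mathbf{P}\big((X_1,\dots,X_N)\in A_N\big)\le\mathbf{P}\big((X_N,\dots,X_N)\in A_N\big)=\mu_N(\tilde A_N)\le\mu(\tilde A_N)=\phi_*\mu(U_N).
\]
Letting $N\to\infty$ and using continuity from below yields $\bar\mu(U)\le\phi_*\mu(U)$. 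The main obstacle is exactly this passage from the finite-dimensional comparisons to the infinite product: one must verify that open increasing sets are exhausted from within by increasing finite-coordinate cylinders, so that nothing is lost in the limit, and the restriction to open (rather than arbitrary measurable) increasing sets is what makes this exhaustion available.
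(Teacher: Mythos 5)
Your construction follows the same route as the paper's proof: first realise $\mu_1\preceq\mu_2\preceq\cdots$ as an almost surely increasing process (the paper simply cites \cite[Proposition 6]{KKO} for this step, while you rebuild it from Strassen plus disintegration plus Ionescu--Tulcea, which is fine), then compare the law of the path with the law of the constant path $(Y,Y,\dots)$, $Y\sim\mu$, on the countable product, and finish with Strassen's theorem there. The one real difference is that for the key domination of the path law by the constant-path law the paper invokes \cite[Proposition 2]{KKO} (ordered finite-dimensional marginals imply ordered laws on the product), whereas you prove this implication by hand via the exhaustion $U_N\uparrow U$.

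It is in this hand-made step that there is a gap: measurability. You have $U_N=D\setminus\pi_N^{-1}\bigl(\pi_N(D\setminus U)\bigr)$, and $\pi_N(D\setminus U)$ is the projection of a closed subset of the Polish space $D$ along the non-locally-compact factor $\mathcal{E}^{\{N+1,N+2,\dots\}}$; such a projection is analytic but in general not Borel. Consequently $\bar\mu(U_N)$ is not defined as a Borel probability, $A_N=\pi_N(U_N)$ and $\tilde A_N$ need not be Borel, and the inequality $\mu_N(\tilde A_N)\le\mu(\tilde A_N)$ cannot be read off from the definition of $\mu_N\preceq\mu$, which quantifies over increasing \emph{Borel} sets. (A smaller imprecision: $A_N$ is not increasing in $\mathcal{E}^N$ --- it contains only monotone tuples --- although the specific comparison you need, between $(X_1,\dots,X_N)$ and $(X_N,\dots,X_N)$, does go through because both are monotone tuples.) The gap is reparable in at least two ways: (i) analytic sets are universally measurable, and applying Strassen to the single pair $\mu_N\preceq\mu$ shows $\mu_N(\tilde A)\le\mu(\tilde A)$ for every universally measurable increasing $\tilde A$; or (ii) bypass the exhaustion entirely: for each $N$ the law of $(X_1,\dots,X_N)$ is dominated by that of $(Y,\dots,Y)$ on all increasing Borel subsets of $\mathcal{E}^N$ (by exactly your two-step comparison through $(X_N,\dots,X_N)$), so Strassen gives monotone couplings of the first $N$ coordinates; extend these to couplings of $\bar\mu$ and $\phi_*\mu$, use tightness (the marginals are fixed) to extract a weak limit, and note that the limit is supported on the closed set $\{(p,q):p\preceq q\}$ since each set $\{(p,q):\pi_N(p)\preceq\pi_N(q)\}$ is closed. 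This second route is essentially the content of \cite[Proposition 2]{KKO}, which is what the paper uses.
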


\begin{proof} Due to \cite[Proposition 4]{KKO} there exist random variables $X_1 \preceq X_2 \preceq ...$ on some space with respective laws $\mu_1,\mu_2,...$. Define
  $F:={\cal E}^\N$ equipped with the product topology and componentwise ordering. Then $F$ is again an ordered Polish space (see \cite[p.901]{KKO}). Denote the joint law of $(X_1,X_2,...)$ on $F$ by $\nu$. Let $\bar \nu$ be the law of $Y,Y,...$ where $Y$ has law $\mu$. Note that, for each $i \in \N$, the law of $(X_1,...,X_i)$ is dominated by the law of $(Y,...,Y)$ on the space of probability measures ${\cal M}_1({\cal E}^i)$. Hence, by \cite[Proposition 2]{KKO}, we have $\nu \preceq \bar \nu$. Applying  Strassen's theorem yields the conclusion of the proposition.
\end{proof}

We introduce the following condition:\\

{\bf (BMC, bounded-monotone-convergence)}
If an increasing sequence
$ x_1 \preceq x_2 \preceq \ldots \preceq x_n \ldots$ of elements from $\E$ is bounded  above by $y\in \E$ (i.e. $ x_n\preceq y $ for all  $n$),  then the
sequence $(x_n)_{n \in \N}$ converges to some $x\in \E$.\\

Note that condition (BMC) and condition \eqref{compatibility} imply that 
$x\preceq y$ and $x_n\preceq x$, for all $n$.

\begin{exa}
  An example where condition (BMC) fails is the space 
  $C([0,1],\mathbb{R})$ of real-valued continuous functions on $[0,1]$ equipped with the sup-norm and order $f \preceq g$ if $f(x)\leq g(x)$ for all $x \in [0,1]$. Indeed, a pointwise limit of an increasing sequence of continuous functions may have jumps. Another example is stated in Remark \ref{remex}.
\end{exa}

\begin{rem}
  A sufficient (but not necessary) condition for (BMC) to hold is that for each $a \preceq b$,  the interval $[a,b]:=\{x:a \preceq x \preceq b\}$ is compact. Indeed, if $a_1 \preceq a_2 \preceq \ldots \preceq b$, then, by assumption, $[a_1,b]$ is compact and therefore each subsequence has a further subsequence which converges. If $\bar a$ and $\tilde a$ are two limit points of such subsequences, then  $\bar a \preceq \tilde a$ and
  $\tilde a \preceq \bar a$, so $\bar a=\tilde a$. Therefore, the whole sequence converges and (BMC) holds. 
\end{rem}
  
The following is an immediate consequence of Proposition \ref{KKO}. 
\begin{cor}\label{BMCmeasures}
  Assume that (BMC) holds for $\cal E$. Then a similar condition holds for $\M_1(\cal E)$: 
 if $\mu_1 \preceq \mu_2 \preceq \ldots$ is an increasing sequence of probability measures which is bounded by the probability measure $\lambda$, then there exists a measure $\mu$ to which the sequence converges weakly.
  \end{cor}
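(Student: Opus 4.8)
The plan is to lift the desired weak convergence on $\M_1({\cal E})$ to an almost sure convergence statement on ${\cal E}$ itself, by realizing the entire increasing sequence of measures together with its upper bound on a single probability space. Concretely, I would apply Proposition \ref{KKO} with $\mu := \lambda$: since $\mu_1 \preceq \mu_2 \preceq \cdots$ and $\mu_i \preceq \lambda$ for every $i \in \N$, the proposition yields ${\cal E}$-valued random variables $X, X_1, X_2, \ldots$ on a common probability space $(\Omega, {\cal F}, {\mathbf P})$ with respective laws $\lambda, \mu_1, \mu_2, \ldots$ and satisfying $X_1 \preceq X_2 \preceq \cdots \preceq X$ almost surely.

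Next I would invoke (BMC) pathwise. On the almost sure event where the ordering holds, $\{X_n(\omega)\}$ is an increasing sequence bounded above by $X(\omega)$, so (BMC) guarantees that it converges in ${\cal E}$ to some limit; I define $X_\infty(\omega)$ to be this limit there and to equal an arbitrary fixed point on the exceptional null set, so that $X_n \to X_\infty$ almost surely. Taking $\mu$ to be the law of $X_\infty$, the measures $\mu_n$ converge weakly to $\mu$: for every bounded continuous $f : {\cal E} \to \R$, dominated convergence gives $\int f \, \d\mu_n = \mathbb{E}\, f(X_n) \to \mathbb{E}\, f(X_\infty) = \int f \, \d\mu$, which is exactly weak convergence.

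The essential work is concentrated in the first step, and this is why Proposition \ref{KKO} (rather than Strassen's theorem applied to a single pair of measures) is the right tool: it couples the whole chain $X_1 \preceq X_2 \preceq \cdots$ and the dominating variable $X$ simultaneously on one space, so that (BMC) can be applied to almost every trajectory. After that the argument is routine, resting only on the elementary facts that monotone bounded sequences converge under (BMC) and that almost sure convergence implies weak convergence. The only point requiring a word of care — and the mildest conceivable obstacle — is the measurability of $X_\infty$, which holds because it is an almost sure pointwise limit of measurable maps into the metric space ${\cal E}$.
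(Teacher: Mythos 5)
Your proof is correct and is precisely the argument the paper has in mind: the corollary is stated as ``an immediate consequence of Proposition \ref{KKO}'', and your route --- couple the whole sequence together with the dominating measure via that proposition, apply (BMC) pathwise to get an almost sure limit, and conclude weak convergence from almost sure convergence --- is exactly that immediate consequence, with the measurability of the limit correctly noted.
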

  
  We will generally not assume condition (BMC) to hold. Whenever we want it to hold, we will explicitly say so.

The following auxiliary result will be used later. 

  \begin{lem}\label{neww}
Let $\mu_n$, $n \in \N$ be a sequence of probability measures on $\cal E$ which converges weakly to $\tilde \pi$ and assume that $\pi$ is a probability measure such that $\lim_{n \to \infty}\mu_n(I)=\pi(I)$ for every $I \in \D$. Then $\tilde \pi=\pi$.  
\end{lem}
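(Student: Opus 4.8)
The plan is to reduce everything to Lemma~\ref{cons}: it suffices to show that $\tilde\pi$ and $\pi$ assign the same value to every increasing measurable set $I\in\D$. I will obtain this by showing that $\pi$ and $\tilde\pi$ integrate every bounded continuous increasing function in the same way, and then invoke the fact that such functions determine the stochastic order.

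First I would convert the set-convergence hypothesis into a statement about integrals of monotone functions. For any $g\in\mathcal G$ the superlevel sets $\{g>t\}$ are increasing (by monotonicity of $g$), hence belong to $\D$, so by hypothesis $\mu_n(\{g>t\})\to\pi(\{g>t\})$ for each $t\in(0,1)$. Using the layer-cake representation
\[
\int g\,\d\mu_n=\int_0^1\mu_n(\{g>t\})\,\d t
\]
together with bounded (dominated) convergence on $[0,1]$, one gets $\int g\,\d\mu_n\to\int g\,\d\pi$ for every $g\in\mathcal G$. This is the only place where the full hypothesis $\mu_n(I)\to\pi(I)$ on all of $\D$ is used.

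Next I would bring in weak convergence. If, in addition, $g\in\mathcal G$ is continuous, then weak convergence of $\mu_n$ to $\tilde\pi$ gives $\int g\,\d\mu_n\to\int g\,\d\tilde\pi$. Comparing the two limits yields $\int g\,\d\pi=\int g\,\d\tilde\pi$ for every bounded continuous increasing $g$ (the restriction of the range to $[0,1]$ is immaterial after an affine change of variable). In particular both $\int g\,\d\pi\le\int g\,\d\tilde\pi$ and $\int g\,\d\tilde\pi\le\int g\,\d\pi$ hold for all such $g$. Finally I would invoke the characterisation of the stochastic order by continuous increasing test functions (part of the Kamae--Krengel--O'Brien circle of results, \cite{KKO}): on an ordered Polish space, $\nu_1\preceq\nu_2$ holds iff $\int g\,\d\nu_1\le\int g\,\d\nu_2$ for every bounded continuous increasing $g$. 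The two inequalities then give $\pi\preceq\tilde\pi$ and $\tilde\pi\preceq\pi$ simultaneously, i.e.\ $\pi(I)=\tilde\pi(I)$ for all $I\in\D$, and Lemma~\ref{cons} closes the argument.

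The main obstacle is precisely this last step: weak convergence controls only continuous test functions, whereas the order $\preceq$ is defined through measurable increasing sets, and one cannot pass between the two by naive regularity. Indeed, enlarging an open set to an increasing open set (its up-set) may add arbitrarily much mass, so outer approximation of a general $I\in\D$ by increasing open sets fails; consequently the two Portmanteau bounds one gets directly, namely $\tilde\pi(U)\le\pi(U)$ for open increasing $U$ and $\tilde\pi(C)\ge\pi(C)$ for closed increasing $C$, cannot be bridged by a bare sandwiching argument. The real content is therefore that an ordered Polish space carries \emph{enough} continuous increasing functions to determine the stochastic order, and this is where I would lean on \cite{KKO} rather than attempt a hands-on topological approximation of increasing sets.
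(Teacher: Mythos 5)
Your proof is correct, and it reaches the conclusion by a route that is parallel to, but not identical with, the paper's. The paper works entirely with sets: it applies the portmanteau theorem to closed increasing sets to get $\tilde\pi(C)\ge\limsup_n\mu_n(C)=\pi(C)$, invokes the part of \cite[Theorem 1]{KKO} saying that closed increasing sets determine the stochastic order to conclude $\tilde\pi\succeq\pi$, and then runs the symmetric argument on closed decreasing sets (complements of open increasing sets) to get $\tilde\pi\preceq\pi$. You instead work with functions: the layer-cake step upgrades the hypothesis $\mu_n(I)\to\pi(I)$, $I\in\D$, to $\int g\,\d\mu_n\to\int g\,\d\pi$ for all $g\in\G$, you match this against weak convergence on the continuous members of $\G$, and you invoke the \emph{other} member of the Kamae--Krengel--O'Brien circle of equivalences, namely that bounded continuous increasing test functions determine $\preceq$; equality of integrals then yields both dominations at once, and Lemma~\ref{cons} finishes. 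Both arguments ultimately rest on the same external input (Theorem 1 of \cite{KKO}), and you correctly identify in your closing paragraph why some such input is unavoidable: neither portmanteau bound alone reaches general measurable increasing sets. What your version buys is that you need not treat the two directions of the order separately (no symmetry step), at the modest cost of the extra layer-cake/dominated-convergence computation; the paper's version is slightly leaner in that it only uses the hypothesis on closed and open increasing sets rather than on all of $\D$.
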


\begin{proof}
  If $I \in \D$ is closed, then
  $$
\tilde \pi(I)\geq \limsup_{n \to \infty} \mu_n(I)=\pi(I),
$$
so \cite[Theorem 1]{KKO} implies $\tilde \pi \succeq \pi$. Similarly, if $I \in \D$ is open, then  $\tilde \pi(I)\leq \pi(I)$ and, since complements of increasing
open sets are decreasing closed sets, we can (by symmetry) again invoke  \cite[Theorem 1]{KKO} to obtain $\tilde \pi \preceq \pi$, so  $\tilde \pi=\pi$.
\end{proof}

We conclude this section by introducing (bivariate) monotone couplings. Since monotone couplings which are not bivariate will not be relevant for us, we will make the bivariate property part of the definition.  

  \begin{defprp} (see \cite[Theorem 2.3.]{MaSh}).
    Let $P$ be the Markov kernel of a monotone Markov chain. There exists a Markov kernel $\bar P$ on ${\cal{E}}\times{\cal{E}}$ with both marginals equal to $P$ with the property that $\bar P\big((x,y),M\big)=1$ whenever $x \preceq y$. The corresponding bi-Markovian coupling is called {\em monotone bi-Markovian coupling} or simply {\em monotone coupling}.
  \end{defprp}

\section{Compressibility and its sufficient conditions}

As before, $({\cal E},\preceq)$ is an ordered Polish space with  Borel $\sigma$-algebra $\EE$ and $P$ is a monotone Markov kernel on ${\cal E}$. Recall that 
$M=\{(x,y): x \preceq y\}$ is closed. For subsets $A$ and $B$ of ${\cal E}$, we write $A \preceq B$ if $x \preceq y$ for every $x \in A$ and $y \in B$. For $x \in {\cal E}$ and $B \subseteq {\cal E}$, we write $x \preceq B$ if $x \preceq y$ for any $y \in B$.

In part (iii) of the following theorem we will need an additional assumption which we call {\em KS-normal} since it is similar (but not equivalent) to the concept of {\em normality} of an ordered Polish space
(see e.g.~\cite{FGS}) and it was introduced by Kamihigashi and Stachurski in \cite{KS}. For a subset $K$ of $E$, we define $i(K):=\{x\in E:\, x\succeq y \mbox{ for some }y \in K\}$ and $d(K):=\{x\in E:\, x\preceq y \mbox{ for some }y \in K\}$. If $K$ is compact then it is easy to see (and well-known) that $i(K)$ and $d(K)$ are closed sets.\\

\begin{defn} (\cite[Assumption 4.1]{KS}) The ordered Polish space $E$ is called {\em KS-normal} if for every compact set $C \subseteq E$, the set $i(C)\cap d(C)$ is also compact.
\end{defn}

Recall that a family $\cal M$ of probability measures on a Polish space $\cal E$ is called {\em tight} if for every $\varepsilon>0$ there exists a compact set $K$ in $\cal E$ such that $\mu(K)\geq 1-\varepsilon$ for every $\mu \in \cal M$. 

\subsection{Generic results}

\begin{thm}\label{L1}
  For a given transition kernel $P$, assume that the set $M$ is achievable. Then,
  \begin{itemize}
\item[i)]  
for any $x,y \in {\cal E}$ and as $n\to\infty$,
\begin{align}\label{conv}
\sup_{g\in {\cal G}} \big| {\mathbf E} g(X_n^x) - {\mathbf E} g(X_n^y)\big| \leq \max \Big\{ {\mathbf P}\big( \tau_{x,y}(M)> n\big), {\mathbf P}\big( \tau_{y,x}(M)> n\big)\Big\}\to 0. 
\end{align}
\end{itemize}
If we additionally assume that there exists a stationary distribution $\pi$ for the Markov chain, then $\pi$ is unique and the following hold:
\begin{itemize}
\item[ii)] For each $x \in {\cal E}$, 
\begin{align}\label{convint}
\lim_{n \to\infty}\sup_{g\in {\cal G}} \Big| {\mathbf E} g(X_n^x) - \int g(y)\,\d \pi(y)\Big| \le 
\lim_{n \to\infty}
\sup_{g \in \G} \int\big|{\mathbf E} g(X_n^x)- {\mathbf E} g(X_n^y)\big|\,\d \pi (y)
=0. 
\end{align}
\item[iii)] 
If, in addition, ${\cal E}$ is KS-normal, then, for every $y \in {\cal E}$, 
the distribution of $X_n^y$ converges to $\pi$ weakly as $n \to \infty$.
\item[iv)] If, for some $x \in {\cal E}$, the laws of  $X_n^x,\;n \in \N_0$ are tight, then they converge to $\pi$ weakly (even if ${\cal E}$ is not KS-normal).  
\end{itemize}
\end{thm}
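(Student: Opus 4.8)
The plan is to prove the four positive assertions (i)--(iv) with a single coupling device and then to settle the negative statements (v)--(vi) by exhibiting explicit chains. For (i), fix $x,y$. Since $M$ is achievable there is a Markovian coupling of $\{X_n^x\}$ and $\{X_n^y\}$ reaching $M$ at the a.s.\ finite time $\tau:=\tau_{x,y}(M)$, where $X_\tau^x\preceq X_\tau^y$. Invoking the strong Markov property I would concatenate this coupling with the monotone coupling of Proposition~\ref{prosi} applied to the ordered pair $(X_\tau^x,X_\tau^y)$, so that $X_n^x\preceq X_n^y$ for all $n\ge\tau$. For $g\in\G$ this gives $g(X_n^x)\le g(X_n^y)$ on $\{\tau\le n\}$, whence $\mathbf E g(X_n^x)-\mathbf E g(X_n^y)\le\mathbf P(\tau>n)$, the error being uniform in $g$ because $g$ takes values in $[0,1]$. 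Running the same construction for $(y,x)$ bounds the opposite difference by $\mathbf P(\tau_{y,x}(M)>n)$; since $\mathbf E g(X_n^x)$ depends only on the marginal law of $X_n^x$, the two one-sided bounds combine into \eqref{conv}, and the right-hand side tends to $0$ because each hitting time is a.s.\ finite.

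For (ii), stationarity of $\pi$ gives $\int g\,\d\pi=\int\mathbf E g(X_n^y)\,\d\pi(y)$ for every $n$, so $\mathbf E g(X_n^x)-\int g\,\d\pi=\int\big(\mathbf E g(X_n^x)-\mathbf E g(X_n^y)\big)\,\d\pi(y)$; taking absolute values and the supremum over $g$ yields the first inequality in \eqref{convint}. The middle expression is then dominated, uniformly in $g$, by $\int\max\{\mathbf P(\tau_{x,y}(M)>n),\mathbf P(\tau_{y,x}(M)>n)\}\,\d\pi(y)$, which tends to $0$ by dominated convergence using (i). For uniqueness, if $\pi'$ were a second stationary law I would integrate the convergence $\mathbf E g(X_n^x)\to\int g\,\d\pi$ against $\pi'$ and use stationarity of $\pi'$ to obtain $\int g\,\d\pi'=\int g\,\d\pi$ for all $g\in\G$; taking $g=\1_I$ with $I\in\D$ shows $\pi$ and $\pi'$ agree on $\D$, and Lemma~\ref{cons} forces $\pi=\pi'$.

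Assertions (iii) and (iv) both reduce to tightness of $\{\mathcal L(X_n^y)\}$: by (ii) we have $\mathcal L(X_n^y)(I)\to\pi(I)$ for every $I\in\D$, so every weak limit point equals $\pi$ by Lemma~\ref{neww}; if the family is tight it is relatively compact, all limit points coincide with $\pi$, and the whole sequence converges weakly to $\pi$. This settles (iv) at once, since there tightness is assumed. For (iii) the real task is to \emph{derive} tightness from normality. I would couple $X^y$ from above and from below by stationary ($\pi$-distributed) chains, using achievability to reach $M$ in each direction, Proposition~\ref{prosi} to maintain the order, and the gluing lemma to put everything on one space, so as to obtain $\pi$-distributed $R_n\preceq X_n^y\preceq V_n$ for $n$ beyond an a.s.\ finite time. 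Normality then bounds $d(R_n,X_n^y)$ by $h(d(R_n,V_n))$, and since $R_n,V_n\sim\pi$ are tight this confines the law of $X_n^y$ near the compact support of $\pi$. I expect this to be the crux: continuity of $h$ only yields a \emph{bounded} confinement region, and promoting boundedness to the precompactness that Prokhorov requires is the delicate point, to be handled by exploiting the order-interval structure of the sandwich together with completeness of $d$ (and, if needed, by strengthening the coupling so that $X_n^y$ is bracketed within compact order intervals).

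Finally, (v) and (vi) are negative statements, to be established by counterexamples within the present framework (monotone chains with $M$ achievable). For (v) I would construct a monotone chain whose marginals $\mathcal L(X_n^x)$ are tight yet which admits no invariant law, the mass staying tight while drifting so that $\mu\mapsto\mu P$ has no fixed point. For (vi) I would exhibit a monotone chain possessing a stationary distribution for which $\mathcal L(X_n^y)$ fails to converge weakly for some $y$; by (iii)--(iv) such an example must be neither normal nor tight along the orbit, which guides the construction.
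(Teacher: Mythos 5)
Your treatment of (i), (ii) and (iv) is correct and follows essentially the same route as the paper: concatenating an $M$-successful coupling with a monotone coupling after $\tau$, using $\int g\,\d\pi=\int \mathbf{E}\,g(X_n^y)\,\d\pi(y)$ plus dominated convergence, and passing to weak limit points via Lemma \ref{neww}; your uniqueness argument via Lemma \ref{cons} is sound (and in fact more explicit than the paper's).

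The genuine gap is in (iii). You build the sandwich $R_n\preceq X_n^y\preceq V_n$ with $R_n,V_n\sim\pi$ correctly, but you then try to extract \emph{tightness} of $\mathcal{L}(X_n^y)$ from it, and you yourself concede that you cannot upgrade the resulting ``bounded confinement'' to precompactness. That route does not close: order intervals need not be compact (the paper notes that compactness of $[a,b]$ is only a sufficient condition for (BMC), which is not even assumed in Theorem \ref{L1}), so bracketing $X_n^y$ between two tight sequences yields nothing of the Prokhorov type. The missing idea is that tightness is not needed at all. Since $R_n$ and $V_n$ have the same law $\pi$ and are a.s.\ ordered for $n$ beyond the coupling time, \cite[Proposition 2.4]{FGS} (the same fact invoked in Lemma \ref{cons}) forces $d(R_n,V_n)\to 0$ in probability; normality then gives $d(R_n,X_n^y)\leq h\big(d(R_n,V_n)\big)\to 0$ in probability, and since $\mathcal{L}(R_n)=\pi$ for every $n$, a standard Slutsky-type argument gives $\mathcal{L}(X_n^y)\Rightarrow\pi$ directly. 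This is exactly how the paper argues, and it is the step your proposal lacks.

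Parts (v) and (vi) are not proved: you describe what a counterexample should do but produce none. For (v) an explicit chain is easy (the paper takes $\E=\{0,1,1/2,1/3,\dots\}$ with the trivial order and the chain that stays put or advances with probability $1/2$ each; all marginals converge weakly to $\delta_0$, hence are tight, yet no invariant law exists). For (vi) the real difficulty, which your sketch does not touch, is that the example must be a \emph{monotone} chain with $M$ \emph{achievable}; the paper has to construct a rather delicate partial order on $\N_0\times\R$ to reconcile achievability of $M$ with non-convergence of the transition probabilities started outside the support of $\pi$. Without explicit constructions the two negative claims remain unestablished.
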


\begin{rem}
In what follows, we call property \eqref{conv} a {\it compressibility
property}.
\end{rem}

\begin{proof}[Proof of Theorem \ref{L1}]
The proposed proof uses the concatenation of 
marginally Markovian couplings.

Proof of i). Take any pair $x\neq y$. Consider an $M$-successful 
marginally Markovian $(x,y)$-coupling of $\big(X_n^x\big)$ and $\big(X_n^y\big)$.
Then $\tau\equiv \tau_{x,y}(M) <\infty$ a.s. 
We concatenate it with an independent  monotone 
bi-Markovian
coupling. 
By Proposition  \ref{prp3.2} the concatenation  is a 
marginally Markovian $(x,y)$-coupling   which, for ease of notation, we denote by $(X^x_n,X^y_n)_{n \in \N_0}$ (skipping the tilde on top of $X$).
    
For any function $g\in \cal G$,
\begin{align*}
{\mathbf E} (g(X_n^x))
&= {\mathbf E} \big( g (X_n^x)\1_{\{n< \tau\}}\big) +
{\mathbf E}\big( g(X_n^x)\1_{\{n\ge \tau\}}\big) \\
&\le {\mathbf P} (n < \tau) +
{\mathbf E} \big( g(X_n^y)\1_{\{n\ge \tau\}}\big)\\
&\le {\mathbf P}(n< \tau) + {\mathbf E}(g(X_n^y)).
\end{align*}
Thus, 
\begin{align*}
{\mathbf E} (g(X_n^x)) - {\mathbf E} (g(X_n^y))\le 
{\mathbf P}(\tau_{x,y}(M)>n).
\end{align*}
Now we may swap $x$ and $y$ in the previous construction, to obtain 
\begin{align*}
{\mathbf E} (g(X_n^y)) -  {\mathbf E}(g(X_n^x))
\le {\mathbf P}(\tau_{y,x}(M)>n).
\end{align*}
 Therefore,
\begin{align*}
|{\mathbf E}(g(X_n^y)) - {\mathbf E}(g(X_n^x))|
\le \max \Big\{{\mathbf P}(\tau_{x,y}(M)>n), 
{\mathbf P}(\tau_{y,x}(M)>n)\Big\}.  
\end{align*}
Since the right-hand side of the above inequality does not depend on the function $g$ and since both random variables $\tau_{x,y}$ and $\tau_{y,x}$ are proper, we conclude that 
\begin{align}\label{conv11}
\sup_{g\in {\cal G}}\ |{\mathbf E}(g(X_n^y)) - {\mathbf E}(g(X_n^x))| \to 0
\quad \text{as} \quad n\to\infty.
\end{align}
This proves \eqref{conv}.\\

Proof of ii). Fix $x \in \E$ and a stationary distribution $\pi$. Then,
\begin{align*}
\sup_{g \in \G} \Big|{\mathbf E} g(X_n^x)-\int g(y)\,\d \pi (y)     \Big|
&=
\sup_{g \in \G} \Big|{\mathbf E} g(X_n^x)-
\int {\mathbf E} g(X_0^y)\,\d \pi (y)     \Big|\\
&=\sup_{g \in \G} \Big|{\mathbf E} g(X_n^x)-
\int {\mathbf E} g(X_n^y)\,\d \pi (y)     \Big|\\
&=\sup_{g \in \G} \Big|\int \left( {\mathbf E} g(X_n^x)- 
{\mathbf E} g(X_n^y)\right) \,\d \pi (y)     \Big|\\
&\leq \sup_{g \in \G} \int\big|{\mathbf E} g(X_n^x)- {\mathbf E} g(X_n^y)\big|\,\d \pi (y)\to 0, 
\end{align*}
as $n\to\infty$, by \eqref{conv} and dominated convergence. Proposition \ref{cons} implies that $\pi$ is unique. \\

iii) 
Let $\varepsilon>0$ and choose a compact set $C$ such that $\pi(C)>1-\varepsilon$. By assumption, the set $\bar C:=i(C) \cap d(C)$ is compact. Since $i(C)$ and $i(C)\backslash d(C)$ are in $\D$ and $d(C)^c$ and $\big(d(C)\backslash i(C)\big)^c$ are in $\D$ it follows from  part ii) that for each $y \in E$ the sequence $\P\big(X_n^y\in \bar C\big)$ converges to
    $\pi(\bar C)$ which is larger than $1-\varepsilon$. Since $\varepsilon>0$ is arbitrary it follows that the laws of
    $X_n^y$, $n \in \N_0$ are tight and the claim follows from part iv).
    \\

iv)  Tightness implies that every subsequence of $\L \big(X_n^x\big),\;n \in \N_0$, has a further subsequence which converges weakly to some probability measure $\tilde \pi$ 
(this follows from Prokhorov's theorem). By (ii) and Lemma \ref{neww},  we have
$\tilde \pi=\pi$, so $\tilde \pi$ is independent of the choice of the subsequence and the claim follows.
\end{proof}

\begin{rem}
Tightness of the laws of  $X_n^x,\;n \in \N_0$ does not imply the existence of a stationary distribution $\pi$ 
(not even if ${\cal E}$ is KS-normal):

Consider the space $\E=\{0,1,1/2,1/3,1/4,...\}$ equipped with the Euclidean metric and the trivial order (i.e.~no pair of distinct states is comparable). This defines a KS-normal ordered Polish space. Consider the chain on $\cal{E}$ which remains in the current state with probability $1/2$ and moves to the next state (in the order given above) with probability $1/2$. The chain is trivially monotone, $M= \{(x,x), x\in\E\}$ is achievable and all transition probabilities converge weakly to a Dirac measure on $0$. In particular, all transition probabilities are tight but, obviously, there is no stationary measure.
\end{rem}

\begin{rem}
  The conditions of Theorem \ref{L1} plus existence of a stationary distribution  $\pi$ do {\em not} imply weak convergence of all transition probabilities to $\pi$ as the following example shows.
  
Equip $\E=\N_0 \times [-1,1]$ with the metric induced by the Euclidean metric in $\mathbb{R}^2$ and consider the chain with transitions 
\begin{align*}
(0,x) &\mapsto (0, \frac x2 +\cal{U})\\
(j,x) &\mapsto (j+1, \frac x2+{\cal{U}}),  j \geq 1, 
\end{align*}
for $x \in [-1,1]$ and $ \cal{U}$ uniformly distributed on $\{-\frac 12,\frac 12\}$. 

Obviously, the chain has a unique stationary distribution $\pi$ (concentrated on the states with first component 0; the stationary distribution is in fact the uniform distribution on those states) and no sequence of transition probabilities with first component larger than 0 converges to $ \pi$ weakly. 

Our aim is to equip $\E$ with a partial order which makes $(\E,\preceq )$ an ordered Polish space for which the chain is monotone and $M$ is achievable. 

Define
\begin{align*}
(i,x)\prec (i,y)& \mbox{ iff } x<y\\ 
(i,x)\prec (j,y) &\mbox{ iff } j >i \mbox { and } y \geq x + h_{i,j}\\
(j,y) \prec (i,x) &\mbox{ iff } j >i \mbox { and } y \leq x-h_{i,j}
\end{align*}
where $h_{i,j}$ are strictly positive numbers which we specify now. We choose $h_{0,j}=2^{-j}$ and  $h_{i,j}=\sum_{k=i+1}^j 2^{-k}$ in case $i \geq 1$. This defines a partial order and  $(\E,\preceq )$ is an ordered Polish space. To see that the chain is monotone, we  couple all one-step transitions by  choosing the same realization of the random variable $\cal{U}$ for all initial conditions.  Then,  the states after one step are almost surely ordered for any given pair of initial conditions which are ordered. 
Finally, let $A$ be the set of all pairs in $\E$ for which the second coordinate is at most $-\frac 34$ and $B$ the set for which the second coordinate is at least $\frac 34$. Clearly, we have $A \prec B$ and the set 
$A \times B$ and hence $M$ are achievable. 

\end{rem}

\subsection{Sufficient conditions for achievability of the set $M$}

In this subsection, we present two sets of sufficient conditions
for achievability of the set $M$. We start with conditions formulated in terms of coupled Markov chains.

\begin{lem}\label{L2-1}
Assume that there exists a set $C\in {\EE}$ such that 
\begin{align}\label{condR}
\text{the set} \ \ C\times C \ \  \text{is achievable};
\end{align}
and that there exist 
two measurable sets $A \preceq B$
and numbers $\varepsilon >0$ and 
$N\ge 1$ such that   
 \begin{align}\label{condS}
P^{(N)}(x,A) \geq \varepsilon \ \ \text{and} \ \  
 P^{(N)}(x, B) \geq \varepsilon, \ \ \text{for any}  \ \ x\in C.
\end{align}
Then
the set $M$ is achievable.
\end{lem}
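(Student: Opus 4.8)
The plan is to realise an $M$-successful coupling as an alternating concatenation of two basic (respectively Markovian) couplings, switching at stopping times, and then to show that only a geometrically-distributed number of ``attempts'' is needed. Throughout, fix $x,y\in{\cal E}$ and let $(\mathcal{F}_n)$ denote the natural filtration of the coupled pair, and set $\sigma_0:=0$.

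First I would proceed in rounds. At the beginning of round $k$ the pair sits in some state $(u,v)$ at time $\sigma_{k-1}$; I run the Markovian coupling provided by \eqref{condR} (achievability of $C\times C$) for the initial pair $(u,v)$ until the pair enters $C\times C$, at a time $\rho_k\ge\sigma_{k-1}$. By the strong Markov property and \eqref{condR}, $\rho_k$ is a.s.\ finite and $X^x_{\rho_k},X^y_{\rho_k}\in C$. I then switch to the \emph{independent} coupling for exactly $N$ steps. Using that the two $N$-step transitions factorise under independent coupling and that both chains are in $C$, condition \eqref{condS} gives
\begin{align*}
{\mathbf P}\big(X^x_{\rho_k+N}\in A,\ X^y_{\rho_k+N}\in B\ \big|\ \mathcal{F}_{\rho_k}\big)
= P^{(N)}\big(X^x_{\rho_k},A\big)\,P^{(N)}\big(X^y_{\rho_k},B\big)\ge \varepsilon^2 .
\end{align*}
Since $A\preceq B$, on this event $X^x_{\rho_k+N}\preceq X^y_{\rho_k+N}$, i.e.\ the pair has entered $M$; I then declare round $k$ \emph{successful} and stop. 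Otherwise I set $\sigma_k:=\rho_k+N$ to be the start of round $k+1$ and repeat. Because all switches between the two couplings occur at stopping times, the remark in Section~2 ensures that the resulting pair process is again a Markovian coupling.

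Next I would check that the first successful round $K$ is a.s.\ finite. Writing $S_k$ for the success event of round $k$, the displayed bound gives ${\mathbf P}(S_k^c\mid\mathcal{F}_{\rho_k})\le 1-\varepsilon^2$ on $\{K\ge k\}$; since $\{K\ge k\}\in\mathcal{F}_{\sigma_{k-1}}\subseteq\mathcal{F}_{\rho_k}$, iterating yields ${\mathbf P}(K>m)\le(1-\varepsilon^2)^m$ for every $m$, whence $K<\infty$ a.s. Each round lasts $\sigma_k-\sigma_{k-1}=(\rho_k-\sigma_{k-1})+N$, which is a.s.\ finite (the first summand by \eqref{condR} and the strong Markov property, the second being the constant $N$); hence $\tau_{x,y}(M)\le \rho_K+N$ is an a.s.\ finite sum of a.s.\ finite terms. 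As $x,y\in{\cal E}$ are arbitrary, $M$ is achievable.

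The hard part, I expect, will be the bookkeeping rather than the probability: one has to set up all the stopping times $\rho_k,\sigma_k$ and the restarted couplings jointly on one probability space (most cleanly by continuing the construction past the first success, so that $\rho_k$ and the events $S_k$ are defined for every $k$), and then justify that conditioning on $\mathcal{F}_{\rho_k}$ genuinely decouples successive rounds, so that the bound ${\mathbf P}(K>m)\le(1-\varepsilon^2)^m$ is legitimate. The two analytic ingredients---factorisation under the independent coupling and the geometric trials---are otherwise routine.
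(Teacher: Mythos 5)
Your proposal is correct and follows essentially the same route as the paper's proof: alternate a $C\times C$-successful coupling with $N$ steps of independent coupling, use \eqref{condS} and the factorisation under independence to get a success probability of at least $\varepsilon^2$ per round, and conclude via a geometric-trials bound that the pair enters $M$ after finitely many rounds. Your write-up is if anything slightly more explicit about the filtration and the conditional decoupling of rounds than the paper's version.
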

Here, condition \eqref{condR} is formulated in terms of a coupling of two Markov chains, and it may not be easy to verify it. Below we provide sufficient conditions for the conclusion of  Lemma \ref{L2-1} to hold, that are formulated in terms of a ``marginal'' Markov chain.

\begin{lem}\label{L2-verynew}
Assume there is an integer $N\ge 1$ such 
that 
condition \eqref{condS} holds with the following additional
constraint:
\begin{align}\label{condSS2verynew}
\text{there exists}  \ \ z_0\in C \ \  \text{such that} \ \ 
A \preceq z_0 \preceq B;
\end{align}
and that 
\begin{align}\label{tauxC}
\tau_x(C) := \inf \{n\ge 1: X_{Nn}^x \in C\} < \infty \ \ \text{a.s., for all} \ \ x\in {\cal E}
\end{align}
and, moreover, there exists a positive and increasing function $g:\N \to \mathbb{R}$
such that
\begin{align}\label{functiong}
\sum_{n=1}^\infty 1/g(n) <\infty
\end{align}
and 
\begin{align}\label{supEg}
\sup_{x\in C} {\mathbf E} g\left(\tau_x(C)\right) < \infty.
\end{align}
Then the set $A\times B$ is achievable, and therefore the set $M$ is achievable, too.

\end{lem}

\begin{rem}\label{remg}
Examples of functions $g$ satisfying \eqref{functiong} include
$g(n)= n^{\alpha}$ and $g(n) = n (\log (n+1))^{\alpha}$, for some $\alpha >1$.
\end{rem}
    
\begin{rem}\label{rem38}
One can get upper bounds on the expected time to achieve the
set $A\times B$ starting from states $x$ and $y$. For example,
if ${\mathbf E} (\tau_x (C))^{\alpha}$, 
${\mathbf E} (\tau_y(C))^{\alpha}$ and 
$\sup_{z\in C} {\mathbf E} (\tau_z(C))^{\alpha}$ are all finite for
some $\alpha >1$,
then there exists a coupling such that the hitting time of the set $A\times B$ has a finite $\alpha$'s moment, too.
See e.g the proof of Theorem 7.3 in Chapter 10 of \cite{Tho} for
a similar consideration.
Similarly, finiteness of exponential moments of the former three terms imply finiteness of an exponential moment of the hitting time. 
\end{rem}

\begin{proof}[Proof of Lemma \ref{L2-1}]

The proposed proof is based on a particular coupling construction
that combines iteratively two types of 
 couplings.

Take any  $x\neq y$. We construct a coupling of $(X_n^x)$ and $(X_n^y)$ and  a random  time
$U<\infty$ a.s. such that $X_U^x\preceq X_U^y$ a.s. 
Then it will follow that $\tau_{x,y}(M)\leq U$ is a.s.~finite too.

Step 1. We start with a $C\times C$-successful coupling of $\big(X_n^x\big)_{n \in \N_0}$ and $\big(X_n^y\big)_{n \in \N_0}$
until time $T_1=\tau_{x,y}(C\times C)$. Then we concatenate it with a  coupling $\big(\hat X_n^{\hat x},\hat X_n^{\hat y}\big)_{n \in \N_0}$ with independent components as in
Proposition \ref{prp3.2}. We continue to use the letter $X$ instead of $\tilde X$ for the concatenated coupling. \\

If the event
\begin{align}\label{T1}
E_1=\{X_{T_1+N}^x \in A, X_{T_1+N}^y \in B\} 
\end{align}
occurs, then, clearly, $ X_{T_1+N}^x \preceq X_{T_1+N}^y$, and we stop the procedure, letting $U=T_1+N$.
Otherwise, we proceed with \\
Step 2:
starting at time $T_1+N$ from values $\widehat{x}=
X_{T_1+N}^x$ and $\widehat{y}=X_{T_1+N}^y$, we run another
$C\times C$-successful coupling until time $T_1+N+T_2$,
\begin{align*}
T_2 = \min \{n > T_1+N: (X_n^x,X_n^y)\in C\times C\}
\end{align*}
and then another independent coupling 
at times $n=T_1+N+T_2+1, \ldots,T_1+N+T_2+N$.\\
If the event 
\begin{align}\label{T2}
E_2=\{ X_{T_1+N+T_2+N}^x \in A,  X_{T_2+N-1}^y \in B\}
\end{align}
occurs, we stop and let $U=T_1+N+T_2+N$, otherwise continue
with\\ 
Step 3: another $C\times C$-successful coupling, followed by an independent coupling of length $N$. An induction argument then completes the construction.

Note that the probability of the event $E_1$ is not smaller than
$\varepsilon^2$. Further, given that $E_1$ does not occur, the probability of $E_2$ is again not smaller than $\varepsilon^2$,
and so on. Therefore, we stop after a random number $\nu$
of steps where
$\nu$ is bounded above by a random variable having a geometric distribution with parameter $\varepsilon^2$ and, therefore, is finite a.s. Thus, 
\begin{align*}
U = \sum_{i=1}^{\nu} T_i + \nu N
\end{align*}
 is a.s. finite too.

\end{proof}

\begin{proof}[Proof of Lemma \ref{L2-verynew}]

We start with two observations.\\

Observation 1: Without loss of generality, we may and will assume that the sets $A$ and $B$ are of the form
\begin{align}\label{ABnew}
A = \{x\in {\cal E}: x\preceq z_0\} \ \ \text{ and } 
\ \ 
B = \{y\in {\cal E}: z_0\preceq y\}.
\end{align}
Then, in particular, $A$ and $B$ are closed and $A$ is decreasing and $B$ is increasing.\\

Observation 2: By  stochastic monotonicity of the transition kernel, ${\mathbf P} (X_N^x \in A) \ge \varepsilon$, for any $x\in A$ and ${\mathbf P} (X_N^y \in B) \ge \varepsilon$, for any $y\in B$.
In particular, these properties hold for $x=z_0$ and for $y=z_0$ and monotonicity and the fact that $A$ is decreasing imply that for any $x \in A$ we have ${\mathbf P} (X_N^x \in A) \ge {\mathbf P} (X_N^{z_0} \in A) \ge \varepsilon$ and, since $B$ is increasing, ${\mathbf P} (X_N^y \in B) \ge {\mathbf P} (X_N^{z_0} \in B) \ge \varepsilon$ for any $y \in B$.\\

Now we turn to the main part of the proof. We first assume that $N=1$.\\

For $x\in {\cal E}$, write for short 
$\tau_x = \tau_x(C)$. 
For  $k\ge 1$, let
$\chi_k^x = \min \{n\ge k: X_n^x\in C\}-k$.
Further, let $R:= \sup_{z\in C} {\mathbf E} g(\tau_z)$, which is finite by the assumptions in the lemma.

Clearly, for any $m\ge 0$, 
\begin{align*}
  {\mathbf P} (\chi_k^x >m) &= {\mathbf P} (\tau_x >k+m) + \sum_{i=0}^{k-1}{\mathbf P} \big( X^x_{i+1}\in C,\,   \chi_i^x >k-i+m-1 \big)\\
                            &={\mathbf P} (\tau_x >k+m) + \sum_{i=0}^{k-1}{\mathbf P} \big( \chi_{i+1}^x >k-i+m-1\big| X^x_i\in C \big)  {\mathbf P} \big(    X^x_i\in C    \big)\\
                            &\leq{\mathbf P} (\tau_x >k+m) + \sum_{i=0}^{k-1}{\mathbf P} \big( \chi_{i+1}^x \geq k-i+m  \big| X^x_i\in C \big)\\
                            &\leq {\mathbf P} (\tau_x >k+m) + \sum_{i=0}^{k-1} \sup_{z \in C} {\mathbf P} (\tau_z \geq k+m-i)\\
                            &\leq {\mathbf P} (\tau_x >k+m) + \sum_{i=0}^{k-1} \frac R{g(k+m-i)}.\\
  &\leq  {\mathbf P} (\tau_x >k+m) +R \sum_{j=0}^{\infty}1/g(j+m+1), 
\end{align*}
where we applied Markov's inequality in the penultimate step. By condition \eqref{functiong}, we may choose $k_0=k_0(x)$ and $m_0$ (not depending on $x$!) such that, for $k \geq k_0$, we have
\begin{equation}\label{choose}
  {\mathbf P} (\chi_{k}^x >m_0)\leq 1/2.
  \end{equation}

Hence, for $m \ge 0$ and $k \geq 1$, we obtain
\begin{align*}
{\mathbf P} (X_{k+m+1}^x\in A) 
&\ge 
\sum_{i=0}^m  
{\mathbf P} (\chi_k^x = i, X_{k+m+1}^x \in A) \\
&= \sum_{i=0}^m  {\mathbf P} (X_{k+m+1}^x \in A| \chi_k^x = i  ) {\mathbf P}(\chi_k^x=i) \\
  &\geq \sum_{i=0}^m \varepsilon^{m-i}\varepsilon {\mathbf P}(\chi_k^x=i)\\
  &\geq \varepsilon^{m+1} {\mathbf P}(\chi_k^x\leq m) 
\end{align*}
which, by \eqref{choose},  is bounded from below by $\frac 12 \varepsilon^{m_0+1}$ in case $m=m_0$ and $k\geq k_0$. The same lower bound holds for ${\mathbf P} (X_{k+m+1}^y \in B)$.

\smallskip

To complete the proof of the lemma in case $N=1$, we proceed similarly to Step 2 in the proof of the previous lemma. Consider $x,y \in {\cal E}$ and take an {\em independent} coupling of the chains $X^x$ and $X^y$. We showed that
$$
{\mathbf P} (X_{n}^x \in A, X_{n}^y \in B)\geq \frac 14 \varepsilon^{2m_0+2}
$$
for all sufficiently large $n$ (depending on $x$ and $y$). Choosing $T(x,y)$ such that ${\mathbf P} (X_{n}^x \in A, X_{n}^y \in B)\geq \frac 18 \varepsilon^{2m_0+2}$ for $n \geq T(x,y)$ and then, recursively,
$T_1=T(x,y)$, $T_i=T_{i-1}+T\big(X^x_{T_{i-1}},X^y_{T_{i-1}}\big)$, $i=2,3,...$, we see that $A \times B$ is achievable.

\smallskip

Finally, we treat the case $N \geq 2$. We define $Y_n^x:=X_{nN}^x$, $n \in \N_0$. The assumptions in the lemma (for $X$) imply that the assumptions also hold for $Y$ with $N=1$ and therefore,
$A \times B$ is achievable for $Y$. Since, as we showed above, $A \times B$ is achievable via independent coupling, the same holds for $X$ and the proof is complete.
\end{proof}

 \section{Conditions for existence of a stationary regime}
 
 \begin{prp} \label{L2}
Suppose that condition (BMC) holds. 
Suppose in addition that the following condition holds:\\

 \noindent {\bf (condition (PR))} There are two points $x_0\preceq y_0$ such that
 the hitting times
 \begin{align*}
 \nu^- \equiv \nu^{-}_{x_0} = \min \{n\ge 1: X_n^{x_0} \succeq x_0\} \quad  \text{and}
 \quad
 \nu^+ \equiv\nu^{+}_{y_0} = \min \{n\ge 1:  X_n^{y_0} \preceq  y_0\}
 \end{align*}
 have finite expectations.\\
 
 Then the Markov chain admits at least one stationary distribution.
 \end{prp}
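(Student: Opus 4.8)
The plan is to sandwich a stationary law between a \emph{sub-invariant} probability measure produced from the lower point $x_0$ and a \emph{super-invariant} one produced from the upper point $y_0$, and then to extract a limit with the help of condition (BMC) through Corollary \ref{BMCmeasures}. The whole argument will use only that $P$ maps $\G$ into $\G$, so that no continuity (Feller) property is required.

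\textbf{Construction of the two measures.} Starting the chain at $x_0$, I consider the expected occupation measure of the first excursion up to the return above $x_0$,
\[
\pi^-(A):=\mathbf{E}\sum_{n=0}^{\nu^- -1}\1\{X_n^{x_0}\in A\},\qquad A\in\EE .
\]
Since $\{\nu^->n\}\in\sigma(X_0^{x_0},\dots,X_n^{x_0})$, a standard rearrangement of the sum gives the invariance defect
\[
\pi^- P-\pi^- = \mathcal{L}(X_{\nu^-}^{x_0})-\delta_{x_0}.
\]
Because $X_{\nu^-}^{x_0}\succeq x_0$ almost surely, the right-hand side is non-negative on every $I\in\D$, so $\pi^-\preceq\pi^- P$; and condition (PR) yields the finite total mass $\pi^-(\E)=\mathbf{E}\,\nu^-<\infty$, so $\hat\mu:=\pi^-/\mathbf{E}\,\nu^-$ is a probability measure with $\hat\mu\preceq\hat\mu P$. (This is exactly where the \emph{finite expectation} in (PR), rather than mere a.s.\ finiteness, enters.) Monotonicity of $P$ turns this into an increasing chain of laws $\mu_0:=\hat\mu\preceq\mu_1\preceq\mu_2\preceq\cdots$, where $\mu_{k+1}:=\mu_k P$. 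The mirror-image construction from $y_0$, using $\nu^+$, yields a probability measure $\check\mu$ with $\check\mu\succeq\check\mu P$ and a decreasing chain $\check\mu\succeq\check\mu P\succeq\cdots$.

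\textbf{Passage to the limit and invariance.} Granting for the moment that $\{\mu_k\}$ is bounded above by a fixed probability measure, Corollary \ref{BMCmeasures} provides a weak limit $\mu_k\to\mu$; since moreover $\mu_k(I)$ is non-decreasing in $k$ for each $I\in\D$, Lemma \ref{neww} identifies $\mu$ through its values on $\D$ and gives $\mu_k(I)\uparrow\mu(I)$ there. For $g\in\G$ I then pass to the limit in the identity $\int g\,\mathrm d\mu_{k+1}=\int Pg\,\mathrm d\mu_k$: the left side tends to $\int g\,\mathrm d\mu$, while, since $Pg\in\G$ as well, the right side tends to $\int Pg\,\mathrm d\mu=\int g\,\mathrm d(\mu P)$. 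Hence $\mu$ and $\mu P$ agree on $\D$, and Lemma \ref{cons} gives $\mu=\mu P$, i.e.\ $\mu$ is stationary.

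\textbf{The main obstacle.} The delicate point is precisely the a priori upper bound needed to apply Corollary \ref{BMCmeasures}: one must rule out that the increasing family $\{\mu_k\}$ lets mass escape upward. The natural dominating measure is $\check\mu$, since $\hat\mu\preceq\check\mu$ would give, by monotonicity of $P$, $\mu_k\preceq\check\mu P^{(k)}\preceq\check\mu$ for all $k$. In a total order $\hat\mu\preceq\check\mu$ is immediate, as the lower excursion lives in $\{z:z\preceq x_0\}$ and the upper one in $\{z:z\succeq y_0\}$, and $x_0\preceq y_0$ separates the two supports. In a genuine partial order, however, an excursion may visit states incomparable to its base point, so this support-separation argument breaks down; establishing the domination $\hat\mu\preceq\check\mu$ (equivalently, the boundedness of $\{\mu_k\}$) then requires combining $x_0\preceq y_0$ with \emph{both} finite-mean return conditions, most plausibly through a joint monotone coupling of the chains $X^{x_0}$ and $X^{y_0}$ furnished by Proposition \ref{prosi}. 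This comparison, rather than the limiting or invariance step, is where I expect the real work to lie.
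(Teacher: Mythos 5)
Your construction of the sub- and super-invariant probability measures $\hat\mu$ and $\check\mu$, the invariance-defect identity, and the monotone iteration $\mu_k=\hat\mu P^{(k)}$ all match the paper's proof, and the point you single out as the main obstacle --- the domination $\hat\mu\preceq\check\mu$ --- is indeed handled there essentially as you guess: the paper compares the two regenerative chains (the ``lower'' one restarted at $x_0$ whenever it returns above $x_0$, the ``upper'' one restarted at $y_0$ whenever it returns below $y_0$), obtaining $\mathbf{P}(L_n\in D)\le\mathbf{P}(U_n\in D)$ for $D\in\D$ from monotonicity of the kernel, and then passes to the Ces\`aro limits furnished by regenerative theory. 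So that part of your plan is sound, if left incomplete.

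The genuine gap is in the step you treat as routine: passing to the limit in $\int g\,\d\mu_{k+1}=\int Pg\,\d\mu_k$. Weak convergence $\mu_k\to\mu$ controls integrals of bounded \emph{continuous} functions only, and neither $g$ nor $Pg$ need be continuous: the paper makes no Feller assumption, and $\G$ consists of merely measurable monotone functions. Your appeal to Lemma \ref{neww} is circular --- that lemma \emph{assumes} that $\mu_k(I)$ converges to $\pi(I)$ for a probability measure $\pi$ on all of $\D$; it does not produce this convergence. And the convergence genuinely can fail: realizing the increasing sequence as $X_1\preceq X_2\preceq\cdots\to X$ (Proposition \ref{KKO} together with (BMC)), for an increasing set $I$ one only gets $\lim_k\mu_k(I)=\mathbf{P}(\exists k:X_k\in I)\le\mathbf{P}(X\in I)=\mu(I)$, and the inequality is strict whenever the limit enters $I$ ``from below''. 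Consequently all one can conclude about the weak limit is $\mu P\succeq\mu$, i.e.\ superinvariance --- the paper states explicitly that invariance of the limit cannot be deduced here --- and the missing idea is how to pass from a superinvariant measure to an invariant one. The paper does this with a Zorn's-lemma argument: it considers the family of superinvariant measures whose iterates remain bounded above by $\pi^+$, shows (using separability of ${\cal M}_1({\cal E})$ and (BMC)) that every totally ordered subfamily has an upper bound in the family, and observes that a maximal element must be invariant, since otherwise applying $P$ would produce a strictly larger element of the same family. Without this step, or some substitute for it, your argument does not close.
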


\begin{proof}[Proof of Proposition \ref{L2}.]

We introduce two auxiliary Markov chains, the "lower" and the "upper". 
 Start with $L_0:=X_0^{x_0}=x_0$ and let $\nu^-$ be
 the first time $n\ge 1$ when $X_n^{x_0} \succeq x_0$ and define 
 $L_n=X_n^{x_0}$  for $n<\nu^-$. Let  $L_{\nu^-}=x_0$ and assume that the process follows the transition probabilities of the original chain after time $\nu^-$ until it hits the set
 $\{x \succeq x_0\}$ next time after time $\nu^-$, say, at time $\nu^-_2$, and let $L_{\nu^-_2}=x_0$. Continue in this way by induction.    
Then we obtain a regenerative Markov chain $(L_n)_{n\ge 0}$ with an atom at $x_0$ and with a finite mean return time ${\mathbf E} \nu^-$ to $x_0$. (There will be no need to
 specify the transition probabilities of this chain starting from states which cannot be reached from $x_0$).

Define the probability distribution 
\begin{align*}
\pi^- (\cdot ) = \frac{{\mathbf E} \left(\sum_{n=0}^{\nu^--1} \1_{\{X_n^{x_0} \in \ \cdot \ \}} \right)}{{\mathbf E} \nu^-}
\end{align*}
and let  $\{\widehat{L}_n\}$, $n \in \N_0$ be the associated stationary chain, i.e.~we start at time 0 with a $\pi^-$-distributed random variable $\widehat{L}_0$
and follow the dynamics to the chain $(L_n)$.

From the classical theory of regenerative Markov chains, 
it is known that $\{\widehat{L}_n\}$ {\em is} a stationary Markov chain and that, for any  measurable set $B\in \E$,
\begin{align}\label{piplus}
\lim_{n\to\infty} \frac{1}{n} \sum_{1}^n \1_{\{L_n\in B\}} 
=
\lim_{n\to\infty} \frac{1}{n} \sum_{1}^n {\mathbf P}(L_n\in B) 
=  \pi^-(B) \quad \text{a.s.}
\end{align}

Analogously, we introduce an "upper" chain $(U_n)_{n \in \N_0}$ that starts from
state $U_0=y_0$ and which moves as $X_n^{y_0}$ until it hits the set $\{y \preceq y_0\}$ at time $\nu_+$. Then let $U_{\nu^+}=y_0$ and restart from $y_0$. Then use induction to continue the construction, with restarting from $y_0$ each time when hitting the
set  $\{y \preceq y_0\}$.  Then define its stationary version  $\hat U_n$, $n \geq 0$ with stationary measure  
\begin{align*}
\pi^+ (\cdot ) = \frac{{\mathbf E} \left(\sum_{n=0}^{\nu^+-1} \1_{\{X_n^{y_0} \in \ \cdot \ \}} \right)}{{\mathbf E} \nu^+}.
\end{align*}
As above, we conclude that
\begin{align}\label{piminus}
\lim_{n\to\infty} \frac{1}{n} \sum_{1}^n \1_{\{U_n\in B\}}  
=
\lim_{n\to\infty} \frac{1}{n} \sum_{1}^n {\mathbf P}(U_n\in B) 
=  \pi^+(B) \quad \text{a.s.}
\end{align}

Recall that $x_0\preceq y_0$ and, due to monotonicity of the kernel of the original Markov chain, we get that ${\mathbf P} (L_n\in D) \le {\mathbf P} (U_n\in D)$, for any $n \geq 0$ and any set $D\in \D$.
Based on the second equalities in \eqref{piplus} and \eqref{piminus}, we may conclude that $\pi^- \preceq \pi^+$.

Next, we consider the sequence of probability measures $\mu_0:=\pi^-$, $\mu_{n+1}:=\mu_nP=\pi^- P^{(n)}$, $n \in \N_0$. From the definition of $(L_n)$ we see that
$\pi^- P \succeq \pi^-$ and since $P$ is monotone, it follows that the sequence $\mu_n$, $n \ge 0$ is increasing. We claim that the sequence is bounded by $\pi^+$. To see this, 
recall that $\pi^- \preceq \pi^+$ and, by monotonicity of $P$, $\pi^+ \succeq \pi^+ P^{(n)} \succeq  \pi^- P^{(n)}=\mu_n$. By the (BMC) property and Corollary
\ref{BMCmeasures}, we see that the sequence
$\mu_n$, $n \ge 0$ has a weak limit $\mu=\mu_{\infty}$. Since we did not assume $P$ to enjoy the Feller property we cannot conclude that the limit is invariant under $P$ but we can at least conclude that $\mu P\succeq \mu$ and, moreover,
$\mu P^{(n+1)} \succeq \mu P^{(n)}$ for all $n\ge 0$. We will now apply Zorn's lemma to show the existence of a stationary measure.

Recall that the space  ${\cal M}_1({\cal E})$ equipped with the topology of weak convergence and the stochastic order is an ordered Polish space (Proposition \ref{orderedPolish}). Let ${\cal M}$ be the subset of probability measures $\mu$ on $\cal E$ for which all $\mu P^{(n)}$, $n \in \N_0$ are bounded above by $\pi^+$ and which are superinvariant, i.e.~$\mu P^{(n+1)} \succeq \mu P^{(n)}$, for all $n\ge 0$. Note that $\cal M$ contains $\pi^-$ and that $\mu \in {\cal M}$ implies $\mu P \in \cal M$.
In order to apply  Zorn's lemma, we need to verify that for each
totally ordered subset $\cal T$ of $\cal M$ there exists some upper bound in $\cal M$. 

Assume this is not the case and that there exists a totally ordered subset $\cal T$ that does not have an upper bound.
As a subspace of the separable metric space ${\cal M}_1({\cal E})$, the space $\cal T$ is itself separable, so  we can find a countable dense subset $\cal A$ of $\cal T$. If $\cal T$ has a maximal element,
then this element is an upper bound and we are done, so assume that $\cal T$ does not possess a maximal element. Let $t \in \cal T$. We claim that there exists some $a \in \cal A$
which satisfies $a \succeq t$.     Since $t$ is not maximal there exists some $u\in \cal T$ which satisfies $u \succ t$. Let $a_n$ be a sequence in $\cal A$ which converges to $u$.
If any of the $a_n$ is greater or equal to $t$ then it is an upper bound of $t$. Otherwise all of the $a_n$ are smaller than $t$. Since  ${\cal M}_1({\cal E})$ is an ordered Polish space, the limit $u$ of the sequence satisfies $u \preceq   t$  which is a contradiction.  Next, we take any strictly increasing sequence $\bar a_n$ of elements in $\cal A$ with the property that for each $b \in \cal A$ there is
some $a$ in the sequence so that $a \succeq b$. This sequence has the property that for each $t \in \cal T$ there is an element in the sequence which dominates $t$. Since 
$\bar a_n \preceq \pi^+$, (BMC) implies that the sequence converges weakly to some $\bar a$.  Then, $\bar a \in \cal M$.

Now, Zorn's lemma shows that there is at least one maximal element $\mu_{\infty}$ in $\cal M$.  In particular, $\mu_\infty$ is superinvariant. Assume that  $\mu_\infty$ in not invariant. Then
$\mu_\infty P^{(2)} \succeq \mu_\infty P \succ \mu_\infty$, so $\mu_\infty P \in {\cal M}$ is strictly larger than $\mu_\infty$ contradicting our assumption that $\mu_\infty$ is not invariant, i.e.~$\mu_\infty$ {\em is} invariant and the proposition is proved.
\end{proof}

\begin{rem}\label{remex}
  Proposition \ref{L2} does not hold without the (BMC) condition. As an example, let $\E=\{\pm \frac 1n,\,n\in\N\}$ equipped with the discrete metric and   the order induced by that of the real line. Consider the order preserving Markov kernel $P\Big(\frac 1n, \big\{\frac 1{n+1}\big\}\Big)=P\Big(-\frac 1n,\big\{ -\frac 1{n+1}\big\}\Big)=1$, $n \in\N$. All conditions of Lemma \ref{L2} hold except for  the (BMC) condition. Obviously, there is no stationary distribution.
  \end{rem}

\begin{rem}
The conditions of Proposition \ref{L2} do not guarantee uniqueness of a stationary distribution. As an example, let $\E=[0,1]$ with the natural order and metric. Let $P$ be the identity, i.e.~$P(x,\{x\})=1$ for every $x \in [0,1]$. All conditions of Proposition \ref{L2} hold and there are infinitely many stationary distributions.
\end{rem}

\section{Conditions for uniqueness of the stationary regime}

Consider again a  monotone Markov chain $(X_n)_{n \in \N_0}$ on an ordered Polish state space ${\cal E}$. 
The following theorem establishes uniqueness of a stationary distribution under different conditions than Theorem \ref{L1}. Here, we do not assume that $M$ is achievable.

\begin{thm}\label{un1}
 Suppose that the following condition holds:\\
 For any $x,y\in \E$, there exist two measurable sets $A_{x,y}$ and $B_{x,y}$ such that $A_{x,y} \preceq B_{x,y}$ and the following
 two random variables are finite a.s.:
 \begin{align}\label{fin}
 \tau^x_{x,y} = \inf \{n\ge 0: \ X_n^x \in A_{x,y}\} \ \ \text{and}
 \ \ \tau^y_{x,y} = \inf \{n\ge 0: X_n^y \in B_{x,y}\}.
 \end{align}
 Then the Markov chain  possesses at most one stationary distribution.
\end{thm}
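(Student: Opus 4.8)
The plan is to show that any two stationary distributions $\pi_1$ and $\pi_2$ satisfy both $\pi_1\preceq\pi_2$ and $\pi_2\preceq\pi_1$; by Lemma \ref{cons} this forces $\pi_1=\pi_2$, so there is at most one. Since the hypothesis is symmetric under interchanging the two coordinates (apply it to the pair $(y,x)$ to make the $\pi_2$-chain go low and the $\pi_1$-chain go high), it suffices to prove $\pi_1\preceq\pi_2$, i.e.\ $\int g\,\d\pi_1\le\int g\,\d\pi_2$ for every $g\in{\cal G}$.

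First I would realise two independent stationary versions $X=\{X_n\}$ and $Y=\{Y_n\}$ with ${\cal L}(X_n)=\pi_1$ and ${\cal L}(Y_n)=\pi_2$ for all $n$, on a common probability space. Conditioning on $(X_0,Y_0)=(x,y)$, the hypothesis supplies sets $A_{x,y}\preceq B_{x,y}$ and the a.s.\ finite times $\sigma:=\tau^x_{x,y}$ and $\rho:=\tau^y_{x,y}$ at which $X_\sigma\in A_{x,y}$ and $Y_\rho\in B_{x,y}$; since $A_{x,y}\preceq B_{x,y}$, this gives $X_\sigma\preceq Y_\rho$. Next I would re-couple the post-hitting trajectories: by the strong Markov property $\{X_{\sigma+m}\}_{m\ge0}$ and $\{Y_{\rho+m}\}_{m\ge0}$ are Markov chains started from the ordered pair $X_\sigma\preceq Y_\rho$, so Proposition \ref{prosi} lets me couple them, and gluing this onto the pre-hitting parts yields a coupling with
\begin{align}\label{offset}
X_{\sigma+m}\preceq Y_{\rho+m}\qquad\text{for all }m\ge0,\ \text{a.s.}
\end{align}
Each marginal is left unchanged, so $X$ remains a stationary copy of $\pi_1$ and $Y$ of $\pi_2$.

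The difficulty is that \eqref{offset} compares the chains only up to the time shift $\rho-\sigma$, and therefore does not directly compare the fixed-time marginals (which are exactly $\pi_1$ and $\pi_2$). I would neutralise the shift by passing to Cesàro averages. Fix $g\in{\cal G}$. From \eqref{offset}, for every $M$,
\begin{align*}
\frac1M\sum_{m=0}^{M-1}g(X_{\sigma+m})\ \le\ \frac1M\sum_{m=0}^{M-1}g(Y_{\rho+m}).
\end{align*}
Because $\sigma,\rho$ are a.s.\ finite and $g$ is bounded, the finite shifts are asymptotically negligible, so Birkhoff's ergodic theorem applied to the stationary sequences $\{g(X_k)\}$ and $\{g(Y_k)\}$ shows that both sides converge a.s.\ as $M\to\infty$ to limits $L_X$ and $L_Y$ with $L_X\le L_Y$ a.s. Taking expectations, using stationarity together with bounded convergence, gives ${\mathbf E}L_X=\int g\,\d\pi_1$ and ${\mathbf E}L_Y=\int g\,\d\pi_2$, whence $\int g\,\d\pi_1\le\int g\,\d\pi_2$. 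As $g\in{\cal G}$ was arbitrary (in particular $g=\1_I$, $I\in\D$), this is precisely $\pi_1\preceq\pi_2$.

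I expect the main obstacle to be exactly this mismatch of the two hitting times: the monotone coupling of Proposition \ref{prosi} can only be started once the chains are simultaneously in position, which happens at the two distinct times $\sigma$ and $\rho$, so it yields ordering only along shifted indices as in \eqref{offset}. The idea of comparing time averages rather than time-$n$ marginals is what converts this shifted ordering into a clean inequality between $\pi_1$ and $\pi_2$, and the points that need care are that the Birkhoff limits exist a.s., are insensitive to the finite shift, and integrate back to the stationary expectations. Applying the same construction to the pair $(y,x)$ then yields $\pi_2\preceq\pi_1$, and Lemma \ref{cons} completes the proof.
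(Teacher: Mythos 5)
Your overall strategy is the same as the paper's: run the two chains independently until the respective hitting times of $A_{x,y}$ and $B_{x,y}$, then switch to a monotone coupling of the time-shifted chains, convert the resulting shifted pathwise ordering into $\pi_1\preceq\pi_2$ via Birkhoff time averages, and finish with Lemma \ref{cons}. Where you genuinely differ is in how the ergodic theorem is invoked: the paper first passes to two distinct \emph{ergodic} stationary measures (via the ergodic decomposition) and then fixes deterministic starting points $x,y$ in the Birkhoff full-measure sets, so that the time averages converge to the constants $\pi_1(I)$ and $\pi_2(I)$; you instead start both chains from their stationary laws and compare the expectations of the (possibly random) Birkhoff limits $L_X,L_Y$. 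Your variant is arguably cleaner in that it dispenses with the ergodic decomposition altogether, and the analytic core (the shift by the a.s.\ finite times $\sigma,\rho$ being negligible for Ces\`aro averages of a bounded function, and ${\mathbf E}L_X=\int g\,\d\pi_1$ by stationarity) is correct.

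There is, however, one genuine gap in your construction. You define the coupling by ``conditioning on $(X_0,Y_0)=(x,y)$'' and then using the sets $A_{x,y},B_{x,y}$ supplied by the hypothesis. For this to define a bona fide probability measure on path space, the family of conditional couplings --- hence, in effect, the map $(x,y)\mapsto(A_{x,y},B_{x,y})$ and the hitting times $\tau^x_{x,y},\tau^y_{x,y}$ --- must depend measurably on $(x,y)$, and the hypothesis gives no such measurability. This is precisely why the paper fixes two \emph{deterministic} points $x,y$ (chosen from the full-measure sets provided by Birkhoff's theorem for the ergodic measures) and builds the coupling for that single pair only. Your argument can be repaired while still avoiding the ergodic decomposition: for each fixed pair $(x,y)$ construct the coupling for that pair alone; observe that the laws of $L_X$ and $L_Y$, being functionals of the individual marginal paths, do not depend on the choice of coupling; deduce ${\mathbf E}^x[L_X]\le{\mathbf E}^y[L_Y]$ for $\pi_1\otimes\pi_2$-a.e.\ $(x,y)$; and integrate this inequality over $\pi_1\otimes\pi_2$ using $\int{\mathbf E}^x[L_X]\,\d\pi_1(x)=\int g\,\d\pi_1$ and its analogue for $\pi_2$. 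With that modification your proof is complete.
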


\begin{rem}
For any $x,y\in \E$, if
\begin{align}
\widehat{\tau} := \inf \{n\ge 0: X_n^y \succeq x \} <\infty \ \ \text{a.s.},
\end{align}
then condition \eqref{fin} follows. Indeed, it is enough to take $A_{x,y} = \{z\in\E : z\preceq x\}$ and $B_{x,y} = 
\{z\in\E : z \succeq x\}$. Then 
$\tau^x_{x,y} \equiv 0$ and $\tau^y_{x,y} = \widehat{\tau}$,
\end{rem}

\begin{rem}
The finiteness of random variables in \eqref{fin} is equivalent to the following condition: there exists a coupling of two Markov chains
$X_n^x$ and $X_n^y$ such that
\begin{align}\label{fin2}
{\mathbf P} (\tau^x_{x,y}<\infty, \tau^y_{x,y}<\infty) =1.
\end{align}
Indeed, condition \eqref{fin2} implies a.s. finiteness of each of the
two random variables. On the other hand, given \eqref{fin}, one can take the independent coupling of the two Markov chains to
arrive at \eqref{fin2}.
\end{rem}

\begin{rem}
  The assumptions of Theorem \ref{un1} do not imply compressibility and if a stationary measure $\pi$ exists, then weak convergence of all transition probabilities does {\em not} necessarily hold.   
   As an example, take $\E=\{0,1\}$ with the trivial order and the chain which jumps from 0 to 1 and from 1 to 0 with probability one. The chain is monotone and $A=B=\{0\}$ is recurrent. Clearly, \eqref{conv} fails, for example, for $g=\1_{\{0\}}$ and $x\neq y$. The uniform distribution $\pi$  on ${\cal E}$ is the unique stationary measure but there is no weak  convergence of transition probabilities. Note that in this example the set $M$ is not achievable.
 \end{rem}
 
 \begin{proof}[Proof of Theorem \ref{un1}] 
   Assume that $\pi_1$ and $\pi_2$ are two distinct stationary measures. Then there exist two distinct {\em ergodic} stationary measures  (see, e.g., \cite{Hai}).
   Hence we can and will assume that $\pi_1$ and $\pi_2$ are distinct and ergodic. Fix an arbitrary set $I \in \D$. By Birkhoff-Khinchin's  ergodic theorem, the set $S_I^j$ of initial conditions $x\in \E$
   for which $\frac 1n \sum_{i=0}^n \1_{X_i^x\in I}$ converges to $\pi_j(I)$ satisfies $\pi_j(S_I^j)=1$, $j\in \{1,2\}$. Fix some $x\in S_I^1$ and $y \in S_I^2$. Let 
   $\tau_1$  be the first time when the chain $X:=X^x$ hits the set $A_{x,y}$ and  let $\tau_2$  be the first time when the chain $Y:=X^y$ hits the set $B_{x,y}$.
   Couple the chains  such that chain $X$ until time $\tau_1$ and the chain $Y$ until time $\tau_2$ are independent. Afterwards, couple the chains such that
   $X_{\tau_1+n}\preceq Y_{\tau_2+n}$ almost surely for all $n \in \N$. This, together with the argument above, implies $\pi_1(I)\leq \pi_2(I)$.  Interchanging $x$ and $y$ in the arguments above leads to the conclusion that 
   $\pi_1(I)=\pi_2(I)$ for every $I \in \D$. Hence $\pi_1=\pi_2$ by Lemma \ref{cons}  contradicting our assumption.
\end{proof}

\section{Stochastic stability}

In this section, we formulate criteria for the 
stability of a monotone Markov chain by combining results from previous sections. By {\it stability}, we mean the ability of the
Markov chain to {\it stabilise in time},  namely, the existence
of a unique stationary distribution $\pi$ and convergence of the distribution of $X_n^x$ to $\pi$ (in some sense) as $n\to \infty$, for any initial value $x\in\E$.
As a corollary (formulated in Remark \ref{remBM}), we obtain a version of a  basic result by Bhattacharya and Majumdar \cite[Theorem 5.1 in Section 3.5]{BM}, see also \cite{BW}. 

\begin{thm}\label{prop6}
Assume that conditions (BMC) and (PR) hold and that the set $M$ is achievable. Then there exists a unique stationary measure $\pi$ and  the statements \eqref{conv} and (ii) to (iv) of Theorem \ref{L1}
hold. 

If, in addition, for $\tau_{x,y}(M)$, $x,y \in \cal E$ as in \eqref{conv}, we have
\begin{equation}\label{beta}
\sup_{x,y \in \cal E}{\mathbf{P}}\big(\tau_{x,y}(M)>n\big)\leq \beta_n,
\end{equation}
then
\begin{equation}\label{betag}
\sup_{g \in \G} \big| {\mathbf{E}} g(X_n^x)-\int g(y)\, \d \pi(y)\big| \leq \beta_n
\end{equation}
and, in particular,
$$
\sup_{I \in \D} | {\mathbf P} (X_n^x \in I) - \pi (I)|\leq \beta_n.
$$
\end{thm}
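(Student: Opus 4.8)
The plan is to assemble the conclusions of Proposition~\ref{L2} and Theorem~\ref{L1}, and then to trace the constant $\beta_n$ through the non-asymptotic estimate hidden inside the proof of part~(ii) of the latter.

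First I would settle the qualitative claims. Conditions (BMC) and (PR) are exactly the hypotheses of Proposition~\ref{L2}, which therefore yields at least one stationary distribution $\pi$. Since $M$ is assumed achievable, Theorem~\ref{L1}(i) already gives the compressibility property \eqref{conv}; and as a stationary distribution now exists, the second half of Theorem~\ref{L1} applies verbatim, giving uniqueness of $\pi$ together with statements (ii)--(iv). This disposes of the first assertion.

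For the quantitative part, assume \eqref{beta}. The one thing to notice is that the supremum defining $\beta_n$ runs over all ordered pairs, so it dominates both ${\mathbf P}\big(\tau_{x,y}(M)>n\big)$ and ${\mathbf P}\big(\tau_{y,x}(M)>n\big)$ (simply relabel $x$ and $y$). Hence the right-hand side of \eqref{conv} is at most $\beta_n$, and \eqref{conv} upgrades to the uniform bound
\[
\sup_{g\in{\cal G}}\big|{\mathbf E}g(X_n^x)-{\mathbf E}g(X_n^y)\big|\le\beta_n\qquad\text{for all }x,y\in{\cal E}.
\]
I would then invoke the non-asymptotic inequality established in the proof of Theorem~\ref{L1}(ii), namely
\[
\sup_{g\in{\cal G}}\Big|{\mathbf E}g(X_n^x)-\int g(y)\,\d\pi(y)\Big|\le\sup_{g\in{\cal G}}\int\big|{\mathbf E}g(X_n^x)-{\mathbf E}g(X_n^y)\big|\,\d\pi(y),
\]
bound the integrand by $\beta_n$ pointwise in $y$, and pull the constant out of the integral (using that $\pi$ is a probability measure). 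This gives \eqref{betag}.

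Finally, the $\D$-estimate follows by specialisation: for every $I\in\D$ the indicator $\1_I$ is a bounded monotone function valued in $[0,1]$, so $\1_I\in{\cal G}$. Writing ${\mathbf P}(X_n^x\in I)={\mathbf E}\,\1_I(X_n^x)$ and $\pi(I)=\int\1_I\,\d\pi$ and applying \eqref{betag} to $g=\1_I$ yields the claimed bound, uniformly in $I\in\D$. I do not anticipate a genuine obstacle, since the statement is essentially a bookkeeping combination of earlier results; the only subtlety is to use the estimate of Theorem~\ref{L1}(ii) in its pre-limit form and to apply \eqref{beta} in both orderings of the pair $(x,y)$.
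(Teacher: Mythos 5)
Your proposal is correct and follows essentially the same route as the paper: the qualitative part is the combination of Proposition~\ref{L2} (existence via (BMC) and (PR)) with Theorem~\ref{L1} (uniqueness and statements (i)--(iv) via achievability of $M$), and the quantitative bound \eqref{betag} is obtained exactly as you describe, by using the pre-limit forms of \eqref{conv} and \eqref{convint} together with \eqref{beta} and then specialising to $g=\1_I$ for $I\in\D$. The paper's proof is just a terser statement of the same bookkeeping.
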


\begin{proof}
The first claim is an immediate consequence of Proposition \ref{L2} and Theorem \ref{L1}.  Further, \eqref{betag} follows from \eqref{conv}, \eqref{convint}, and \eqref{beta}.
  \end{proof}

\begin{cor} \label{prp1}
Assume that condition (BMC) holds. Assume further that the following splitting condition holds: there exist $c\in \E$, $\varepsilon >0$ and $N\in \N$ 
such that, for $A_c = \{x\in \E : x\preceq c\}$ and 
$B_c = \{x\in\E: x\succeq c\}$, and for any $z\in \E$, 
\begin{align}\label{BM1}
P^{(N)} (z, A_c)\equiv {\mathbf P} (X_N^z\in A_c)\geq \varepsilon  \quad \text{and} \quad 
P^{(N)} (z, B_c) \geq \varepsilon.
\end{align}
Then the Markov chain admits a unique stationary distribution $\pi$, and,
for any $x\in \E$,  $n \in \N$,
\begin{align}\label{BM2}
\sup_{I \in \D} | {\mathbf P} (X_n^x \in I) - \pi (I)|\le (1-\varepsilon)^{\lfloor \frac nN \rfloor} .
\end{align}
In particular, there is geometric convergence of the transition probabilities to the stationary distribution in the uniform (Kolmogorov) metric.
\end{cor}

\begin{proof}[Proof of Corollary \ref{prp1}]
  Note that $M$ is achievable using an independent coupling and that (PR) holds with $x_0=y_0=c$. Using independent coupling, \eqref{beta} holds with
  $\beta_n=(1-\varepsilon^2)^{\lfloor \frac nN \rfloor}$,
  so \eqref{BM2} holds with $1-\varepsilon^2$ instead of $1-\varepsilon$. 

  To see that the convergence rate can be improved to $1-\varepsilon$, we first consider the case $N=1$. We can find a coupling on 
  ${\cal E} \times {\cal E}$  such that $P\big( (x,y),A_c \times B_c\big)\geq \varepsilon$ for all $x,y \in \cal E$: for given $x\neq y$, toss a coin which comes up heads with probability $\varepsilon$ and in this case let $X^x_1 \in A_c$ and $X^y_1 \in B_c$ in such a way that both have the correct law. Then $M$ is achievable and    so  \eqref{BM2} holdsin case $N=1$.

  For $N \ge 2$ we argue as follows. Considering the chain $X^x_{kN}$ for $k=0,1,...$ we just saw that
  $$
|{\mathbf P}(X^x_{kN}\in D)-\pi(D)|\leq (1-\varepsilon)^{k},\;k \in \N_0,\;x \in {\cal E},\; D \in \D. 
$$
For $m \in \{0,...,N-1\}$, $x \in \cal E$, denote the law of $X_m^x$ by $\nu_{x,m}$. Using the Markov property we obtain, for $D \in \D$,
\begin{align*}
  \big|{\mathbf P}(X^x_{kN+m}\in D)-\pi(D)\big|&=\big|{\mathbf E}\big[ {\mathbf P}(X_{kN}^{X_m^x}\in D)|X^x_1,...,X^x_m)\big]-\pi(D)\big|\\
                                               &=\big|{\mathbf E}\big[ {\mathbf P}(X_{kN}^{X_m^x}\in D)|X^x_m)\big]-\pi(D)\big|\\
                                               &=\big|\int_{\mathcal E}   {\mathbf P}(X_{kN}^y\in D)-\pi(D)\,\d \nu_{x,m}(y)   \big|\\
                                               &\leq \int_{\mathcal E} \big|{\mathbf P}(X_{kN}^y\in D)-\pi(D)\big|\,\d \nu_{x,m}(y)  \\
                                               &\leq (1-\varepsilon)^{k},
\end{align*}
so \eqref{BM2} holds for general $N \in \N$.
\end{proof}

\begin{rem}\label{remBM}
  The set-up of Corollary \ref{prp1} contains the case in which $\cal E$ is a $G_\delta$ subset of $\mathbb{R}^k$ for some $k \in \N$ (i.e.~$\cal E$ is a countable intersection of open sets in $\mathbb{R}^k$). Such a set is a Polish space with respect to the trace topology (see \cite[Proposition 8.1.5]{C}). If, in addition, $\preceq$ is a partial order on $\cal E$ (e.g.~the componentwise order) and (BMC) holds, then Corollary \ref{prp1} can be applied. Particular cases of $G_\delta$ sets in $\mathbb{R}^k$ equipped with the componentwise order which satisfy (BMC) are closed subsets of $\mathbb{R}^k$.

  Results of this type were initially obtained in Dubins and Freedman \cite{DF} under the additional assumption of continuity of the transition kernel. The original proofs in \cite{BM,BM2} are different from ours. 
  \end{rem}

\section{Discussion}
\subsection{A Markov chain as a stochastic recursion}

{\it Coupling representation of a Markov chain.} 
It is known (and follows, say, from \cite{BF}, Section 2) that any Markov chain  $(X_n)_{n \in \N_0}$ 
 on a Polish space $({\cal E, \EE})$ may be represented as 
\begin{align}\label{SR0}
X_{n+1}=f(X_n,\eta_{n}), \ \text{for all} \ \ n,
\end{align}
where the {\it driving sequence} $\{\eta_n\}$ is i.i.d. and may be taken real-valued with the uniform $U(0,1)$ distribution, and $f: {\cal E} \times (0,1) \to {\cal E}$
a measurable function on the product space.
We will say that \eqref{SR0} is a {\it stochastic recursion}, or
an {\it SR (coupling) representation} of a Markov chain.

{\it SR representation of two Markov chains.}  
One can couple the chains  $(X_n^x)$ and $(X_n^y)$ with initial states $x\neq y$ on a common probability space using recursions $X_{n+1}^x=f(X_n^x,\eta_{n}^x)$ and
$X_{n+1}^y=f(X_n^y,\eta_{n}^y)$, where each of the driving sequences $\{\eta_n^x\}$ and $\{\eta_n^y\}$ is i.i.d., but their joint distribution at any time $n$ may be arbitrary and, in particular, may depend on $n$ and on the history and on the future of the sequence. 

The following result holds.
\begin{cor}\label{cor1}
Under the conditions of Proposition \ref{prosi}, there exists a coupling representation of two Markov chains, $(X_n^x)$ and $(X_n^y)$ as stochastic recursions \eqref{SR0} such that
$X_n^x\preceq X_n^y$ a.s., for all $n$. Moreover, thanks to Proposition 4 from \cite{KKO}, a similar coupling construction is valid for any linearly
ordered sequence of initial states $x_1\preceq x_2 \preceq \ldots \preceq x_n \preceq \ldots$.
\end{cor}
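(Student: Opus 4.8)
The plan is to fix one universal stochastic-recursion function for the kernel $P$ and to couple only the two uniform driving sequences, building the joint process step by step so that monotonicity is preserved at each transition. By the fact quoted just before \eqref{SR0} (see \cite{BF}) I may fix a measurable $f:{\cal E}\times(0,1)\to{\cal E}$ with $f(z,\cdot)_{*}\mathrm{Leb}=P(z,\cdot)$ for every $z\in{\cal E}$, where $\mathrm{Leb}$ denotes Lebesgue measure on $(0,1)$. The same $f$ will drive both chains; all the work goes into prescribing, at each step, the joint law of the pair of driving variables.

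The core is a lifting step. Fix $a\preceq b$. By monotonicity $P(a,\cdot)\preceq P(b,\cdot)$, so Strassen's theorem (Proposition \ref{prop1}) provides a coupling $\lambda_{a,b}$ of $P(a,\cdot)$ and $P(b,\cdot)$ with $\lambda_{a,b}(M)=1$. Since $(0,1)$ and ${\cal E}$ are standard Borel, Lebesgue measure disintegrates along the measurable map $f(a,\cdot)$: there is a probability kernel $w\mapsto Q^a_w$ on $(0,1)$, concentrated on $f(a,\cdot)^{-1}(\{w\})$, with $\int_{\cal E} Q^a_w\,P(a,\mathrm{d}w)=\mathrm{Leb}$; similarly one obtains $R^b_{w'}$ for $f(b,\cdot)$. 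I would then set
\[\rho_{a,b}(\mathrm{d}u\,\mathrm{d}v)=\int_M Q^a_w(\mathrm{d}u)\,R^b_{w'}(\mathrm{d}v)\,\lambda_{a,b}(\mathrm{d}w\,\mathrm{d}w').\]
Both marginals of $\rho_{a,b}$ equal $\mathrm{Leb}$, and under $\rho_{a,b}$ one has $f(a,u)=w$ and $f(b,v)=w'$ almost surely, so $(f(a,u),f(b,v))$ has law $\lambda_{a,b}$ and in particular $f(a,u)\preceq f(b,v)$ holds $\rho_{a,b}$-almost surely.

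With this in hand I would construct the joint chain by induction. Start from $X_0^x=x\preceq y=X_0^y$. Given the past, with $X_n^x=a\preceq b=X_n^y$, draw $(\eta_n^x,\eta_n^y)\sim\rho_{a,b}$ (conditionally independent of everything drawn before) and set $X_{n+1}^x=f(a,\eta_n^x)$ and $X_{n+1}^y=f(b,\eta_n^y)$. The lifting property keeps $X_{n+1}^x\preceq X_{n+1}^y$, so $X_n^x\preceq X_n^y$ a.s.\ for all $n$. Because the first marginal of $\rho_{a,b}$ is $\mathrm{Leb}$ for \emph{every} admissible $(a,b)$, the conditional law of $\eta_n^x$ given the past is $U(0,1)$ irrespective of the past; hence $\eta_n^x$ is independent of the past and $\{\eta_n^x\}$ is i.i.d.\ $U(0,1)$, which makes $X_{n+1}^x=f(X_n^x,\eta_n^x)$ a genuine SR representation \eqref{SR0} of the chain started at $x$. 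The same argument applies to $\{\eta_n^y\}$ and the chain started at $y$, while the joint law of $(\eta_n^x,\eta_n^y)$ depends on the history through $(X_n^x,X_n^y)$, exactly as permitted in the SR representation of two chains.

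For the countable case I would replace the pairwise Strassen coupling by the simultaneous ordered coupling of the transition measures $P(x_1,\cdot)\preceq P(x_2,\cdot)\preceq\cdots$, whose existence is the content of Proposition \ref{prosi} (Proposition 4 of \cite{KKO}); lifting it through the disintegrations $(Q^{x_k})_k$, conditionally independently given the ordered target $(w_k)_k$, yields driving sequences $\{\eta_n^{x_k}\}$ that are each i.i.d.\ $U(0,1)$ and preserve $X_n^{x_1}\preceq X_n^{x_2}\preceq\cdots$. The main obstacle, and the only genuinely technical point, will be measurability: to make the recipe $(a,b)\mapsto\rho_{a,b}$ into a bona fide transition kernel (so that the inductive construction is legitimate, e.g.\ via Ionescu--Tulcea) one must select the Strassen couplings $\lambda_{a,b}$ and the disintegrations $Q^a,R^b$ jointly measurably in their parameters. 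This is handled by standard measurable-selection and measurable-disintegration theorems on standard Borel spaces; the remaining probabilistic content is routine.
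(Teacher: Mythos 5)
Your construction is correct, and it is in fact more explicit than what the paper offers: the paper states Corollary \ref{cor1} without proof, treating it as an immediate consequence of Proposition \ref{prosi} together with the fact that any Markovian coupling of two copies of the chain can be encoded by a pair of $U(0,1)$ driving sequences with history-dependent joint law, as described at the start of Section 7. You instead rebuild the monotone coupling directly at the level of the noise: Strassen's theorem gives an ordered coupling $\lambda_{a,b}$ of $P(a,\cdot)$ and $P(b,\cdot)$, and you lift it to a coupling $\rho_{a,b}$ of two copies of Lebesgue measure by disintegrating $\mathrm{Leb}$ along the fibres of $f(a,\cdot)$ and $f(b,\cdot)$; the marginal computation, the a.s.\ order of $(f(a,u),f(b,v))$, and the verification that each driving sequence is i.i.d.\ uniform (because the conditional law of $\eta_n^x$ given the past is $\mathrm{Leb}$ for every admissible $(a,b)$) are all sound. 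What your route buys is a self-contained argument that does not presuppose Proposition \ref{prosi} for the pairwise case, at the cost of the measurability issue you correctly isolate: one must choose $(a,b)\mapsto\lambda_{a,b}$ and $a\mapsto Q^a$ jointly measurably, which does follow from measurable-selection and measurable-disintegration theorems on standard Borel spaces (the set of ordered couplings of $P(a,\cdot)$ and $P(b,\cdot)$ is a nonempty closed subset of ${\cal M}_1({\cal E}\times{\cal E})$ depending measurably on $(a,b)\in M$, so Kuratowski--Ryll-Nardzewski applies), but is genuinely nontrivial and is exactly the point the paper's one-line attribution sweeps under the rug. Two minor remarks: for the countable case the simultaneous ordered coupling of the one-step transition measures is really \cite[Proposition 6]{KKO} (as used in the paper's Proposition \ref{KKO}) rather than Proposition 4, though iterating the gluing lemma also works; and note that your coupling makes the pair $(X_n^x,X_n^y)$ a time-homogeneous Markov chain, which is consistent with, and slightly stronger than, what the corollary asks for.
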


This allows us to provide alternative formulations and proofs
of our results using a parallel ``language'' of stochastic 
recursions. There are pro's and con's for doing that. We decided to follow a more standard terminology,
with stochastic kernels.

\begin{rem}
One might expect that if a Markov chain is monotone on a partially ordered state space, then there always exists a coupling representation \eqref{SR0} with function $f$  {\it monotone in the first argument}.  However, this is not true, in general, as was shown in \cite[Example 1.1]{FM} with a finite state space ${\cal E}$ containing only four elements, $|\E |=4$. On the other hand, this fact is correct in the case of linear (complete) ordering (see again \cite{FM}).
\end{rem} 

\subsection{Splitting conditions and stability}

In this subsection, we provide several examples that may help to clarify  relations between the splitting conditions and stability.

It was pointed out in \cite{BM}  that the splitting conditions are close to necessary for stability in the case of finite-dimensional Euclidean spaces. Here we provide two examples that show that this is not always the case. Further, we provide an example  that shows that, in general, under the assumptions of Theorem 6.1, we cannot expect 
convergence in a stronger sense than in the uniform metric. 

\begin{exa}\label{eq:exS1}
Let ${\cal E} = [0,1]$ and $(U_n)_{n\ge 0}$ an i.i.d.~sequence of uniform on $[0,1]$ random variables, and let 
\begin{align*}
X_{n+1}^x = X_n^x + \frac{U_n}{2} \left(
\frac{1}{2}-X_n^x\right), \  \text{for} \  X_0^x=x\in [0,1] \ \text{and} \  n=0,1,\ldots .
\end{align*}
Here  $(X_n^x)$ is stochastically monotone and weakly converges to its 
unique stationary distribution that is degenerate at point $1/2$. However, if $x\neq 1/2$, then $X_n^x \neq 1/2$ for all $n$ and there is no convergence in the uniform metric and, therefore, there is no convergence  in total variation. The splitting conditions  of Theorem 6.1 and Corollary \ref{prp1} do not hold for this Markov chain.
\end{exa}

Assume now that ${\cal E}$ is a closed subset of ${\mathbb{R}}^k$ with $k>1$. Then a Markov chain may converge to the unique limiting distribution 
in a strong sense (say, in total variation), while the splitting conditions from Theorem 6.1 do not hold.
We provide a very simple example.
\begin{exa}\label{eq:exS2}
Consider the interval 
\begin{align*}
{\cal E} = \{(x_1,x_2): \  x_1\ge 0, x_2\ge 0, x_1+x_2 = 1/\sqrt{2} \}.
\end{align*}
Assume that we use the standard component-wise partial ordering: $x=(x_1,x_2) \preceq y=(y_1,y_2)$ iff 
$x_1\leq y_1$ and $x_2 \leq y_2$.
Then the partial ordering in ${\cal E}$ is trivial: any point of ${\cal E}$ is comparable with itself only.
Assume that $P(x,A) = l(A)$ for any measurable set $A\in {\cal E}$ where $l(A)$ is its Lebesgue measure. Then the Markov chain converges to its stationary (uniform) distribution, but there is no splitting.

\end{exa}	

We end this subsection with one more example. In Theorem 6.1,
we show the convergence to the stationary distribution in the uniform metric $\sup_{I \in {\cal I}} |{\P} (X_n^x \in I) - \pi (I)|$.
However, the stronger convergence in the total variation may not follow, in general. 

\begin{exa}\label{eq:exS3}
Let ${\cal E}=[0,1]$ and consider a Markov chain 
$$
X_{n+1} = \frac{1}{2}(X_n+\xi_{n+1}), \ \ n=0,1,\ldots,
$$
where
$\{\xi_n\}_{n\ge 1}$ is an i.i.d. sequence of Bernoulli random variables. 
Here the splitting condition from Theorem 6.1 clearly holds with $c=1/2$, i.e. $\P (X_1^x \ge 1/2) \ge 1/2$ and $\P(X_1^x \le 1/2) \ge 1/2$, for all $x\in [0,1]$. The limiting distribution is uniform on $[0,1]$, it is (absolutely) continuous w.r.t. Lebesgue measure.
However, for any $n$ and any fixed $x$, the distribution of $X_n^x$ is discrete. Therefore, there is no convergence in the total variation.
\end{exa} 
Similar conclusions may be made for many other discrete distributions of $\{\xi_n\}$ on $[0,1]$. However, if the distribution of $\{\xi_n\}$ has an absolutely continuous component, then the convergence  in the total variation follows.

\section{Generalisations}

One can expect the results of this paper to be generalised to non-Markovian settings and, in particular, to Markov-modulated (Markov-adapted, in another terminology) Markov chains and
to stochastic recursions with stationary ergodic driving sequences, see e.g. \cite{FSTW} for terminology. For that, one may try to combine arguments from the current paper and from \cite{FSTW}
where the case of a bounded state space was considered. However, such generalisations are not straightforward and require further work.


\appendix
\section{Appendix: Proof of Proposition \ref{prp3.2} }


\begin{proof}[Proof of Proposition \ref{prp3.2}]
We will prove that
$(\widetilde{X}_n,\widetilde{Y}_n)_{n\ge 0}$ is a marginally Markovian $(x,y)$-coupling with transition kernel $P$. Note that  $\tau$ is also a stopping time with respect to the filtration
$\mathcal{G}_n:=\sigma(\widetilde{X}_k,\widetilde{Y}_k)_{k \leq n}, n\in \N_0$. For fixed $n\in \N_0$ and $A \in \EE$, we will show that
$$
{\mathbf P}\big(\widetilde {X}_{n+1}\in A\,|\,\mathcal{G}_n\big)=P\big(\widetilde X_n, A\big),\mbox{ a.s.}
$$

The right hand side is $\mathcal{G}_n$-measurable, so we have to show that, for each event $G \in \mathcal{G}_n$, 
\begin{align}\label{identi}
{\mathbf E} \left(\1_{\widetilde{X}_{n+1}\in A} \1_G\right) = {\mathbf E} \left(
P(\widetilde X_n, A) \1_G\right) 
\end{align}
(the corresponding equality for $\widetilde{Y}$ will follow in the same way).  It is enough to show \eqref{identi} for $G$ replaced by $G\cap\{\tau \geq n+1\}$ and for $G$ replaced by $G\cap\{\tau=k\}$ for every $k \leq n$.
Note that all of these events are in $\mathcal{G}_n$ since $\tau$ is a stopping time with respect to  $\mathcal{G}_n,\,n \in \N_0$. We have
\begin{align*}
{\mathbf E} 
\left(
   \1_{\widetilde{X}_{n+1}\in A} 
  \1_{G\cap\{\tau \geq n+1\}}
  \right)
  &= 
  {\mathbf E} 
  \left(
 \1_{X^x_{n+1}\in A} 
  \1_{G\cap\{\tau \geq n+1\}}
  \right) 
  =
  {\mathbf E}
  \left(
  P(X^x_n,A)
  \1_{G\cap\{\tau \geq n+1\}}
  \right) 
  \\
 &=
 {\mathbf E}
 \left(
  P(\widetilde{X}_n,A)
  \1_{G\cap\{\tau \geq n+1\}}
  \right)
\end{align*}
where the second equality holds since $(X^x_n,X^y_n)_{n\geq 0}$ is a marginally Markovian $(x,y)$-coupling. For $k \in \{0,\cdots,n\}$ we have, by definition,
\begin{align*}
{\mathbf P}\big(\hat X_{n+1-k}^{X_k^x,X_k^y} \in A\,|\,\mathcal{G}_n)= P(\hat X_{n-k}^{X^x_k,X_k^y},A)  \   \mbox{ on the event } \{\tau=k\},
\end{align*}
thus 
\begin{align*}
{\mathbf E}
\left(
  \1_{\widetilde{X}_{n+1}\in A} 
  \1_{G\cap\{\tau =k\}}
  \right)
  &= 
  {\mathbf E}
  \left(
  \1_{ \hat X_{n+1-k}^{X_k^x,X_k^y}     \in A} 
  \1_{G\cap\{\tau =k\}}
  \right)
  = {\mathbf E}
  \left(
  P( \hat X_{n-k}^{X^x_k,X_k^y}   ,A)
  \1_{G\cap\{\tau =k\}}
  \right)
  \\
  &=
  {\mathbf E}
  \left(
  P(\widetilde{X}_n,A)
  \1_{G\cap\{\tau =k\}}
  \right),
\end{align*}
so the claim follows.
\end{proof}

{\bf Acknowledgements}. The authors are very thankful to
Jim Fill and Motoya Machida for discussions and valuable
comments related to various coupling constructions. 
The authors also thank the anonymous referees and the editor for many  helpful comments  and suggestions. 
Michael Scheutzow acknowledges financial support from the London Mathematical Society, grant No. 22203.


\begin{thebibliography}{17}


\bibitem{BF} Borovkov, A.A.~and Foss, S.G.,  Stochastically recursive sequences and their generalisations. {\it Siberian Advances in Mathematics}, {\bf 2} (1992), 16--81.

\bibitem{BM} Bhattacharya, R.~and Majumdar, M., {\it Random Dynamical Systems: Theory and Applications}. Cambridge University Press, Cambridge, 2007. 

\bibitem{BM2} Bhattacharya, R.~and Majumdar, M., 
Random iterates of monotone maps. {\it Res. Econ. Design},
{\bf 14} (2010), 185--192.

\bibitem{BW} Bhattacharya, R.~and Waymire, E.,
{\it Stationary Processes and Discrete Parameter Markov Processes}. Springer, 2022.

\bibitem{BS} Butkovsky, O., Scheutzow, M., Couplings via comparison principle and exponential ergodicity of SPDEs in the hypoelliptic setting, {\it Comm.~Math.~Phys.}, {\bf 379} (2020), 1001--1034.

\bibitem{BSC} Butkovsky, O., Scheutzow, M., Correction to: Couplings via comparison principle and exponential ergodicity of SPDEs in the hypoelliptic setting, {\it Comm.~Math.~Phys.}, {\bf 406} (2025), 5 pp.

\bibitem{C} Cohn, D.,
{\it Measure Theory}. Birkh\"auser, 2013. 

\bibitem{DF} Dubins, L.E. and Freedman, D.A.,
Invariant probabilities for certain Markov processes.
{\it Ann. Math. Stat.}, {\bf 37} (1966), 837--848.

\bibitem{FM} Fill, J. and Machida, M., Stochastic monotonicity and realizable monotonicity, {\it Ann. Probab.}, {\bf 29} (2001), 938--978.


\bibitem{FGS} Flandoli, F., Gess, B.~and Scheutzow, M., Synchronization by noise for order-preserving random dynamical systems, {\it Ann. Probab.}, {\bf 45} (2017), 1325--1350.

\bibitem{FSTW} Foss, S., Shneer, V., Thomas, J.P., Worrall, T., 
Stochastic stability of monotone economies in regenerative environments, {\it Journal of Economic Theory}, {\bf 173} (2018),
334--360. 

\bibitem{GM} Ghosh, R. and Marecek, J.,
Iterative function systems: a comprehensive survey.
ArXiv: 2211.14661.

\bibitem{Hai}
  Hairer, M. Ergodic Theory for Stochastic PDEs.\\ 
  http://www.hairer.org/notes/Imperial.pdf, 2008.

\bibitem{KK} Kamae, T., Krengel, U.,
  Stochastic partial ordering,
  {\it Ann.~Probab.}, {\bf 6} (1978), 1044--1049.
  
\bibitem{KKO} Kamae, T., Krengel, U., O'Brien, G.L.,
  Stochastic inequalities on partially ordered spaces,
  {\it Ann. Probab.}, {\bf 5} (1977), 899--912.

\bibitem{KS} Kamihigashi, T. and Stachurski, J., A unified stability theory for classical and monotone Markov chains, {\it J. Appl. Prob.}, {\bf 56} (2019), 1--22.
  
\bibitem{Li} Lindvall, T.,  On Strassen's theorem on stochastic domination. {\it Elect. Comm. in Probab.}, {\bf 4} (1999),
51--59.

\bibitem{MaSh} Machida, M. and Shibakov, A., Monotone bivariate Markov kernels with specified marginals. {\it Proc.~Amer.~Math.~Soc.}, {\bf 138} (2010), 2187--2194.

\bibitem{MS} Matias, E. and Silva, E., Random iterations of maps on ${\R}^k$: stability, synchronisation and functional central limit theorem. {\it Nonlinearity}, {\bf 34} (2021), 1577--1797.

\bibitem{Tho} Thorisson, H. {\it Coupling, Stationarity and Regeneration}, Springer 2000. 

\end{thebibliography}
\end{document}